\documentclass[11pt]{article}
\usepackage{latexsym,bm}
\usepackage{mathrsfs}
\usepackage{amsmath,amsthm}
\usepackage{graphicx}
\usepackage{amssymb}
\usepackage{comment}
\usepackage{CJK}
\usepackage{color}
\usepackage{cite}
\usepackage{array}
\usepackage{stfloats}
\usepackage[colorlinks,
            linkcolor=blue,       
            anchorcolor=blue,  
            citecolor=blue,        
            ]{hyperref}
\usepackage{caption}
\usepackage{graphicx}
\usepackage{epstopdf}
\usepackage{upgreek}

\newtheorem{theorem}{Theorem}[section]
\newtheorem{lemma}[theorem]{Lemma}

\newtheorem{corollary}[theorem]{Corollary}
\newtheorem{proposition}[theorem]{Proposition}

\newtheorem{problem}[theorem]{Problem}
\theoremstyle{definition}
\newtheorem{definition}[theorem]{Definition}
\newtheorem{example}[theorem]{Example}
\theoremstyle{remark}
\newtheorem{remark}[theorem]{Remark}
\DeclareGraphicsExtensions{.eps,.eps.gz}
\usepackage[top=2.5cm,bottom=2.5cm,left=2.8cm,right=2.2cm]{geometry}
\normalsize \rm
\allowdisplaybreaks[4]
\makeatletter
\@addtoreset{equation}{section}
\makeatother

 \usepackage{indentfirst}
\newcommand{\idim}{\operatorname{idim}}

\begin{document}
\begin{CJK}{GBK}{song}
\newcommand{\song}{\CJKfamily{song}}    
\newcommand{\fs}{\CJKfamily{fs}}        
\newcommand{\kai}{\CJKfamily{kai}}      
\newcommand{\hei}{\CJKfamily{hei}}      
\newcommand{\li}{\CJKfamily{li}}        
\renewcommand\figurename{Fig.}

\begin{center}
{{\huge Resolving the Klav\v{z}ar-Kov\v{s}e conjecture on opposite semicube isomorphisms in partial cubes and its extension}} \\[18pt]
{\Large Zhaoman Huang$^{1}$, Yan-Ting Xie$^{1}$ \footnotetext{*Corresponding author.\\ E-mail address: shjxu@lzu.edu.cn (S.-J. Xu).} Shou-Jun Xu$^{1,*}$ }\\[6pt]
{ \footnotesize  $^{1}$ School of Mathematics and Statistics, Gansu Center for Applied Mathematics, Lanzhou University, Lanzhou, Gansu 730000, China}

\end{center}
\vspace{1mm}
\begin{abstract}

Partial cubes are a fundamental class of graphs that admit isometric embeddings into hypercubes. Klav\v{z}ar and Kov\v{s}e [Ars Combin. 93 (2009), 77--86] observed that the opposite semicubes of every harmonic-even partial cube are pairwise isomorphic, and asked whether the converse is true, that is, whether a partial cube is harmonic-even if and only if its opposite semicubes are pairwise isomorphic. In this paper, we answer this question in the negative by constructing an infinite family of partial cubes with pairwise isomorphic opposite semicubes that are not harmonic-even. This establishes that pairwise opposite-semicube isomorphism is strictly weaker than harmonic-evenness and naturally leads to the question of what additional condition restores the equivalence. To address this question, we introduce the opposite-semicube Helly property and prove that a finite partial cube satisfying this property is antipodal, or equivalently harmonic-even by Polat's theorem, if and only if it has pairwise isomorphic opposite semicubes.

\noindent {\bf Keywords:} Partial cube; antipodal graph; harmonic-even graph; semicube; Helly property. 
\end{abstract}

\section{Introduction}

Partial cubes form a fundamental class of graphs that admit isometric embeddings into hypercubes. Graham and Pollak~\cite{Graham1971} introduced them in their study of the addressing problem for loop switching. Since then, partial cubes have been studied extensively because of their deep connections with metric graph theory, combinatorial geometry, and network design. Classical results and comprehensive overviews can be found in the monographs \cite{HammackImrichKlavzar2011, Imrich2000} and the survey \cite{Ovchinnikov2008}.

A central theme in the theory of partial cubes is the relationship between local symmetry properties and global antipodal structures, such as distance-balancedness or isomorphism of opposite semicubes. For a connected graph $G$, a vertex $y$ is called a \emph{relative antipode} of $x$ if $d_G(x,y) \ge d_G(x,z)$ holds for every neighbor $z$ of $y$, where $d_{G}$ denotes the usual distance in $G$; equivalently, $y$ is an endpoint of a maximal geodesic starting at $x$  \cite{Sabidussi1996}. A vertex $y$ is said to be an \emph{absolute antipode} of $x$ if $d_{G}(x,y)$ equals the diameter of $G$, where the diameter is defined as the maximum distance between any two vertices in $G$. A graph is \emph{antipodal} if every vertex has exactly one relative antipode. Note that, if $G$ is antipodal, then the unique relative antipode of a vertex $x$ is an absolute antipode
of $x$, and thus is denoted by $\bar{x}$. A related notion is that of an \emph{even graph}, in which every vertex $x$ has a unique absolute antipode $\bar{x}$. A graph is \emph{harmonic-even} (also called \emph{automorphically diametrical} \cite{Sabidussi1996}) if it is even and the antipodal map $x \mapsto \bar{x}$ for $x\in V(G)$ is an automorphism of $G$. It is easy to see that every antipodal graph is harmonic-even. A graph $G$ is \emph{distance-balanced} if for every edge $uv\in E(G)$, the number of vertices closer to $u$ than to $v$ equals the number of vertices closer to $v$ than to $u$. Every harmonic-even graph is distance-balanced, but the converse is false even for partial cubes \cite{Handa1999}.

Antipodal bipartite graphs were first studied by Kotzig \cite{Kotzig1968}, who called such graphs \(S\)-graphs. Subsequent work by Glivjak, Kotzig, and Plesn\'{\i}k~\cite{Glivjak1970} gave a metric characterization. In the setting of partial cubes, Fukuda and Handa~\cite{FukudaHanda1993} conjectured that every harmonic-even partial cube is antipodal. This conjecture was confirmed by Polat \cite{Polat2019}, who proved that, for partial cubes, harmonic-evenness and antipodality are equivalent. He also gave several equivalent characterizations of antipodal partial cubes, involving \(\Theta\)-faithful automorphisms, semicube-switching automorphisms, and the pre-hull number.

Klav\v{z}ar and Kov\v{s}e \cite[Problem~5.3]{KlavzarKovse2009} observed that, in every harmonic-even partial cube, the two opposite semicubes associated with each $\Theta$-class are isomorphic, and asked whether the converse holds. That is, they asked whether a partial cube is harmonic-even if and only if its opposite semicubes are pairwise isomorphic. We call a partial cube with the latter property \emph{opposite-semicube-isomorphic}.

In this paper, we first give a negative answer to the question of Klav\v{z}ar and Kov\v{s}e. For every $n\ge 5$, set $k=n-5$. Define
\[
S_n=\{000000^k,\;000010^k,\;000100^k,\;001010^k,\;110111^k,\;111001^k,\;111101^k,\;111111^k\}\subseteq V(Q_n),
\]
and let $B_n=Q_n-S_n$. The graph $B_5$ is a $24$-vertex graph adapted from an example of Handa \cite{Handa1999}. We prove that $B_n$ is a partial cube, and that for every $\Theta$-class of $B_n$, the two opposite semicubes are isomorphic. However, $B_n$ is not antipodal, and hence not harmonic-even by Polat's theorem. Thus, the local condition of pairwise isomorphic opposite semicubes is strictly weaker than harmonic-evenness. We also show that Cartesian products preserve the relevant local isomorphism property. In particular, $B_n\square Q_m$ is opposite-semicube-isomorphic but not antipodal for every $n\geq5$ and $m\geq0$. These constructions also provide negative answers to the related questions considered by Polat \cite{Polat2019}.

The counterexamples reveal the obstruction to the converse. Opposite-semicube isomorphism concerns one $\Theta$-class at a time, whereas antipodality requires compatibility among all $\Theta$-classes. More precisely, for a vertex $x$, an antipodal vertex $\bar{x}$ must lie in the semicube opposite to $x$ with respect to every $\Theta$-class. Thus, the family of all semicubes opposite to $x$ must have a common vertex. The isomorphism of the two semicubes associated with each individual $\Theta$-class does not, by itself, guarantee such a common intersection. This observation leads naturally to a Helly-type condition.

Helly-type properties play a central role in graph convexity. A standard example comes from median graphs, the convex subsets of a median graph satisfy the finite Helly property; equivalently, convex sets in median graphs are gated, and gated sets form a Helly family \cite{BandeltChepoi2008}. This is also closely related to the classical connection between median graphs and Helly hypergraphs \cite{MulderSchrijver1979}. Since every semicube of a partial cube is convex, this suggests that a Helly-type condition on suitable families of opposite semicubes is a natural condition to consider. Motivated by this, we introduce a weaker and more targeted condition, called the opposite-semicube Helly property. For a fixed vertex \(x\), consider the family of semicubes opposite to \(x\), one for each \(\Theta\)-class. We require only that, whenever this special family is pairwise intersecting, their total intersection is nonempty. This is not the usual Helly property for all convex sets, nor even for all semicubes; rather, it is a condition designed to test whether the local opposite-semicube data can be realized by a single vertex opposite to \(x\) in all \(\Theta\)-classes.

Another contribution of this paper is to show that the above Helly-type condition is sufficient to turn local opposite-semicube isomorphisms into a global antipodal structure. More precisely, we prove that a finite partial cube with the opposite-semicube Helly property is antipodal if and only if its opposite semicubes are pairwise isomorphic. By Polat's theorem, the same characterization holds with harmonic-evenness in place of antipodality. The proof explains the role of the Helly condition. Opposite-semicube isomorphism implies a two-dimensional balance relation for every pair of \(\Theta\)-classes, which in turn gives pairwise intersection of the semicubes opposite to a fixed vertex. The opposite-semicube Helly property then upgrades this pairwise intersection to a common vertex, and this vertex is forced to be the unique relative antipode.

The paper is organized as follows. After recalling the basic terminology, Section~2 introduces the opposite-semicube Helly property. In Section~3, we construct the counterexample family \(B_n=Q_n-S_n\) and show that these graphs are opposite-semicube-isomorphic but not antipodal. Cartesian product extensions of this construction are developed in Section~4. The positive result under the Helly-type condition is established in Section~5. Finally, Section~6 presents concluding remarks and several open problems.

\section{Preliminaries}

All graphs considered are undirected, connected, simple, and finite. For a graph $G$, we denote by $V(G)$ its vertex set and by $E(G)$ its edge set. For a subset $S \subseteq V(G)$, $G[S]$ denotes the subgraph induced by $S$, and $G-S = G[V(G)\setminus S]$. A \emph{path} $P$ of length $n$ is the graph with distinct vertices $x_0, x_1, \dots, x_n$ and edges $x_i x_{i+1}$ ($0\leq i\leq n-1$), denoted by $\langle x_0, x_1, \dots, x_n\rangle$. For $x,y\in V(G)$, the \emph{distance} $d_G(x,y)$ is the length of a shortest $x$,$y$-path in $G$. A shortest $x$,$y$-path of $G$ is called an \emph{$x$,$y$-geodesic} of $G$. The {\em interval} $I_G(x,y)$ of $G$ consists of all vertices that lie on some $x$,$y$-geodesic. A subgraph $H$ of $G$ is \emph{isometric} if $d_H(x,y)=d_G(x,y)$ for all $x,y\in V(H)$, and it is \emph{convex} if for any $x,y\in V(H)$ every $x$,$y$-geodesic in $G$ is contained in $H$. The \emph{Cartesian product} $G \,\square\, H$ of two graphs $G$ and $H$ has vertex set $V(G)\times V(H)$, and $(g,h)$ is adjacent to $(g',h')$ if either $gg'\in E(G)$ and $h=h'$, or $g=g'$ and $hh'\in E(H)$. The distance between $(g,h)$ and $(g',h')$ in $G \,\square\, H$ is consistent with the distance formula of the Cartesian product: $d_{G\,\square\, H}((g,h),(g',h'))=d_G(g,g')+d_H(h,h')$.

For $n\ge 1$, the $n$-dimensional hypercube $Q_n$ is the graph with vertex set $\{0,1\}^n$, where two vertices are adjacent if they differ in exactly one coordinate. We write a vertex $x\in V(Q_n)$ as a binary word $x=x_1x_2\cdots x_n$. Its distance is the Hamming distance $d_{Q_n}(x,y)=|\{i\in \{1, \ldots, n\}: x_i\neq y_i\}|$. Denote \(x^{(i)}\) the vertex obtained from \(x\) by changing the \(i\)-th coordinate for \(i\in\{1,\ldots,n\}\) and \(x^{(i,j)}\)  the vertex obtained by changing both the \(i\)-th and \(j\)-th coordinates for \(i\neq j\) and \(i,j\in\{1,\ldots,n\}\). To avoid confusion with this coordinate-flipping notation, we use superscripts without parentheses only to abbreviate repeated bits in binary strings; for example, \(1^n\) denotes the string \(111\ldots 11\) of length \(n\).
 An induced subgraph $H$ of $Q_n$ is a \emph{partial cube} if it is isometric in $Q_n$.

The \emph{Djokovi\'{c}-Winkler relation} $\Theta$ on $E(G)$ (see \cite{Djokovic1973, Winkler1984}) is defined as follows. For edges $e=uv$ and $f=xy$,
\[
e\,\Theta\,f \iff d_G(u,x)+d_G(v,y) \neq d_G(u,y)+d_G(v,x).
\]
The relation $\Theta$ is clearly reflexive and symmetric. If $G$ is bipartite, this is equivalent to $d_G(u,x)=d_G(v,y)$ and $d_G(u,y)=d_G(v,x)$. Winkler \cite{Winkler1984} proved that a graph is a partial cube if and only if it is bipartite and $\Theta$ is an equivalence relation on $E(G)$. The equivalence classes are called \emph{$\Theta$-classes}, and their number equals the isometric dimension $\idim(G)$ of $G$, where the \emph{isometric dimension} denotes the least
non-negative integer $n$ such that $G$ is an isometric subgraph of $Q_{n}$.

Let \(G\) be a partial cube, and let \(E_1,\ldots,E_d\) be its \(\Theta\)-classes. For each \(\Theta\)-class \(E_i\), the removal of the edges in \(E_i\) separates \(G\) into two convex components, called the two \emph{semicubes} associated with \(E_i\). We denote their vertex sets by $W_i^0~ \text{and} ~W_i^1.$ Equivalently, if \(xy\in E_i\), then, after possibly interchanging the labels \(0\) and \(1\),
\[
W_i^0=\{a\in V(G): d_G(x,a)<d_G(y,a)\},~\text{and}~
W_i^1=\{a\in V(G): d_G(y,a)<d_G(x,a)\}.
\]
The induced subgraphs \(G[W_i^0]\) and \(G[W_i^1]\) are called \emph{opposite semicubes}. A basic property of partial cubes is that every semicube is convex in \(G\) \cite{Djokovic1973}. We say that \(G\) is \emph{opposite-semicube-isomorphic} if $G[W_i^0]\cong G[W_i^1]$ for every \(i\in\{1,\ldots,d\}\). For \(x\in V(G)\), let \(W_i(x)\) denote the unique semicube among \(W_i^0\) and \(W_i^1\) that contains \(x\), and let $W_i^*(x)$ denote the other semicube. Under a fixed isometric embedding of $G$ into a hypercube, let \(E_i\) be the \(\Theta\)-class corresponding to coordinate \(i\). Then, \(W_i^0\) and \(W_i^1\) are precisely the two coordinate halfspaces $\{x\in V(G)|x_i=0\}~\text{and}~
\{x\in V(G)|x_i=1\}$, up to the choice of labels \(0\) and \(1\). When several graphs are considered simultaneously, we write \(W_i^{G,0}\), \(W_i^{G,1}\), \(W_i^G(x)\), and \(W_i^{G,*}(x)\) to indicate the ambient graph.

We now introduce the Helly-type condition that will be used to assemble the local choices of opposite semicubes into a single global vertex.

\begin{definition}
Let \(G\) be a finite partial cube with \(\Theta\)-classes \(E_1,\ldots,E_d\). For each \(x\in V(G)\), set
\[
\mathcal O_G(x)=\{W_i^{*}(x)\mid 1\le i\le d\}.
\]
We say that \(G\) has the \emph{opposite-semicube Helly property} if, for every \(x\in V(G)\), whenever $W_i^{*}(x)\cap W_j^{*}(x)\neq\emptyset
\ \text{for all } 1\le i,j\le d$ and $i\neq j$, we have $\bigcap_{i=1}^d W_i^{*}(x)\neq\emptyset.$
\end{definition}

\begin{remark}
The opposite-semicube Helly property should be understood as a condition on the local choices of opposite semicubes.  Since every semicube is convex in a partial cube, median graphs---the graphs whose family of all convex subsets satisfies the finite Helly property \cite{BandeltChepoi2008}---are a subclass of graphs that obviously satisfy the opposite-semicube Helly property. Besides, see also \cite{MulderSchrijver1979} for the classical connection between median graphs and Helly hypergraphs.

The condition is not vacuous, i.e., it is not equivalent to antipodality. For example, every tree is a median graph, and hence satisfies the opposite-semicube Helly property, but most trees are not antipodal. Thus, the opposite-semicube Helly property alone does not force antipodality.

This condition plays a different role. It provides the global compatibility needed to assemble the local choices of opposite semicubes into a single vertex that is opposite to a given vertex in every \(\Theta\)-class. The counterexamples constructed in Section~3 show that, without this condition, the property of
opposite-semicube-isomorphism cannot imply antipodality.
\end{remark}


For a subset $A\subseteq V(G)$, its \emph{convex hull} $co_G(A)$ is the smallest convex set containing $A$. A \emph{copoint} at a vertex $x$ is a convex set $C$ maximal with respect to $x\notin C$; the vertex $x$ is called an \emph{attaching point} of $C$. The \emph{pre-hull operator} $I_G$ is defined by $I_G(A)=\bigcup_{x,y\in A} I_G(x,y)$, and the convex hull satisfies $co_G(A)=\bigcup_{n\ge 0} I_G^n(A)$. For a vertex $x\in V(G)$ and a copoint $C$ at $x$, define $r(x;C)$ to be the smallest $n$ such that $co_G(C\cup\{x\})=I_G^n(C\cup\{x\})$; if no such $n$ exists, $r(x;C)=\infty$. The \emph{pre-hull number} $ph(G)$ is defined as the maximum value of $r(x;C)$ among all vertices of $G$ and all copoints at them. An automorphism $\alpha$ of a partial cube $G$ is \emph{$\Theta$-faithful} if for every edge $xy$, the edges $xy$ and $\alpha(y)\alpha(x)$ belong to the same $\Theta$-class. It is \emph{semicube-switching} if it induces an isomorphism $G[W_{i}^0]\cong G[W_{i}^1]$ for every $\Theta$-class $E_{i}$ of $G$. Polat \cite{Polat2019} proved the following characterizations.

\begin{theorem}\cite{Polat2019}\label{T2.1}
For a partial cube $G$, the following are equivalent.
\begin{enumerate}
\item $G$ is antipodal.
\item $G$ has a $\Theta$-faithful automorphism.
\item $G$ has a semicube-switching automorphism.
\item $ph(G)\le 1$ and $G$ is distance-balanced.
\end{enumerate}
Moreover, $G$ is harmonic-even if and only if it is antipodal.
\end{theorem}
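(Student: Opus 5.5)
Since this result is due to Polat and is only cited in the paper, I would organise its proof around one observation, and expect almost all of the difficulty to lie in the last equivalences. Fix an isometric embedding of $G$ into $Q_d$ with $d=\idim(G)$, so that every vertex is a word in $\{0,1\}^d$ and $d_G$ is Hamming distance.

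\emph{Key observation.} If some vertex $z$ satisfies $z\in W_i^*(x)$ for all $i$, that is, $z$ differs from $x$ in every coordinate, then $z$ is the unique relative antipode of $x$. To see this, let $y\neq z$. Then $y$ agrees with $x$ in some coordinate $j$ where $y$ and $z$ differ. Since $G$ is isometric in $Q_d$, some $y,z$-geodesic starts with an edge that flips such a coordinate $j$, because every coordinate where $y$ and $z$ differ is flipped along any geodesic and we may choose the first one appropriately via the interval structure. That first edge goes to a neighbour $y'$ with $d(x,y')=d(x,y)+1$, so $y$ is not a relative antipode of $x$. Moreover $z$ itself has distance $d$ to $x$, the maximum possible, so $z$ is one.

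\emph{(3)$\Rightarrow$(1).} I would apply the observation to $z=\alpha(x)$. A semicube-switching automorphism $\alpha$ maps $W_i^0$ onto $W_i^1$ and vice versa, so $\alpha(x)\in W_i^*(x)$ for every $i$.

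\emph{(2)$\Rightarrow$(3).} Let $\alpha$ be $\Theta$-faithful. For an edge $xy\in E_i$ with $x\in W_i^0$, the edge $\alpha(y)\alpha(x)$ lies in $E_i$ with the orientation reversed, so $\alpha(x)\in W_i^1$. Since edges inside $W_i^0$ are not in $E_i$ and $\alpha$ preserves the $\Theta$-classes edgewise, connectivity of $W_i^0$ gives $\alpha(W_i^0)\subseteq W_i^1$. Symmetrically $\alpha(W_i^1)\subseteq W_i^0$, which is exactly the semicube-switching property.

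\emph{(1)$\Rightarrow$(2).} I would first show that in an antipodal partial cube $\bar x$ differs from $x$ in all coordinates. Suppose $\bar x_i=x_i$ for some $i$. Take an edge $uv\in E_i$ and a geodesic from $x$ through the $E_i$-side. The endpoint of any maximal geodesic extending it is a relative antipode lying in $W_i^*(x)$, and this contradicts uniqueness. Once this holds, $x\mapsto\bar x$ is coordinatewise complementation restricted to $V(G)$. Complementation is an isometry of $Q_d$ and maps $V(G)$ into itself, so it is an automorphism that reverses every edge within its own class, i.e.\ it is $\Theta$-faithful.

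\emph{Harmonic-even $\Leftrightarrow$ antipodal.} Antipodal implies harmonic-even by the above. For the converse (the Fukuda--Handa conjecture), the aim is again to show $d(x,\sigma(x))=d$ for the antipodal automorphism $\sigma$. I would try to prove that $\sigma$ is $\Theta$-faithful by showing that for every edge $xy$ the four vertices $x,y,\sigma(x),\sigma(y)$ satisfy $d(x,\sigma(y))=d(y,\sigma(x))=\operatorname{diam}-1$. Then $xy\,\Theta\,\sigma(y)\sigma(x)$, and (2)$\Rightarrow$(1) applies.

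\emph{(1)$\Leftrightarrow$(4).} Antipodal partial cubes are distance-balanced because complementation swaps opposite semicubes, and $ph\le1$ should follow because every copoint is a semicube $W_i^*(x)$, whose union with $x$ becomes convex after a single pre-hull step using geodesics towards $\bar x$. For the reverse direction, I would combine distance-balance, which gives $|W_i^0|=|W_i^1|$, with $ph\le1$ to show that all semicubes opposite a fixed $x$ share a vertex, and then apply the key observation.

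\emph{Main obstacles.} I expect the hardest steps to be the converse of (4) and harmonic-even $\Rightarrow$ antipodal, where one must show that the diameter equals $\idim(G)$. This is exactly the kind of global intersection property whose failure the paper's counterexamples exhibit.
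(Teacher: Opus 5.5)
The paper does not prove this theorem; it is quoted from Polat, so your proposal has to stand on its own. Most of it does. The sound parts are:
\begin{itemize}
\item the key observation;
\item (3)$\Rightarrow$(1);
\item (2)$\Rightarrow$(3), where you rightly read $\Theta$-faithfulness with orientation, i.e.\ $\alpha(y)$ lies on the same side of the class of $xy$ as $x$ (with unordered classes the identity would qualify);
\item (1)$\Rightarrow$(2), via a maximal geodesic that crosses $E_i$ once;
\item (1)$\Rightarrow$(4), via $I_G(x,\bar x)=V(G)$.
\end{itemize}

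\textbf{The gap: (4)$\Rightarrow$(1).} You only announce that distance-balance and $ph(G)\le 1$ should produce a common vertex of $\mathcal O(x)$. That is the entire content of this implication. Since $B_n$ is distance-balanced but not antipodal, $ph(B_n)>1$, so $ph\le 1$ must be used in an essential way. The missing argument is short.
\begin{enumerate}
\item Pairwise intersection and copoints. Distance-balance gives $|W_i^0|=|W_i^1|$, and this is all the computation in Lemma~\ref{lem:quadrant-balance} uses. It yields $|W_i^*(x)\cap W_j^*(x)|=|W_i(x)\cap W_j(x)|>0$ and $|W_i^*(x)\cap W_j(x)|=|W_i(x)\cap W_j^*(x)|$. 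The second identity shows that $W_i^*(x)\subseteq W_j^*(x)$ forces equality, which is impossible for $i\ne j$.
\item The hull is everything. Convex sets of a partial cube are intersections of semicubes, so the copoints at $x$ are the maximal members of $\mathcal O(x)$. Hence every $W_i^*(x)$ is a copoint at $x$. Also no semicube contains $W_i^*(x)\cup\{x\}$: such a semicube would be some $W_j(x)$ with $W_j^*(x)\cap W_i^*(x)=\emptyset$. Therefore $co_G(W_i^*(x)\cup\{x\})=V(G)$.
\item Using $ph(G)\le 1$. This now gives $I_G(W_i^*(x)\cup\{x\})=V(G)$. If a relative antipode $y$ of $x$ were not in $W_i^*(x)$, it would lie on an $x,c$-geodesic with $c\in W_i^*(x)$ and $c\neq y$. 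Its successor on that geodesic is a neighbour farther from $x$, a contradiction.
\end{enumerate}
So every relative antipode lies in $\bigcap_i W_i^*(x)$, and your key observation finishes the proof. In this direction $ph(G)\le 1$ does exactly the job of the paper's Helly property.

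\textbf{Harmonic-even $\Rightarrow$ antipodal is not an obstacle.} It follows immediately along your own plan. Let $\sigma$ be the antipodal automorphism, $D=\operatorname{diam}(G)$, and $xy\in E(G)$. Then $\sigma(y)\ne\sigma(x)$ is adjacent to $\sigma(x)$. So $d_G(x,\sigma(y))\neq D$ by uniqueness of $\bar x$, and $d_G(x,\sigma(y))\equiv D+1 \pmod 2$ by bipartiteness. Hence $d_G(x,\sigma(y))=D-1$, and symmetrically $d_G(y,\sigma(x))=D-1$. Thus $xy\,\Theta\,\sigma(y)\sigma(x)$, with $\sigma(y)$ on the side of $x$, so $\sigma$ is $\Theta$-faithful. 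Your chain (2)$\Rightarrow$(3)$\Rightarrow$(1) then gives $\operatorname{diam}(G)=\idim(G)$ with no further work.

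\textbf{A small simplification.} In the key observation no careful choice of geodesic is needed. Every coordinate in which $y$ and $z$ differ is one in which $y$ agrees with $x$, so the first edge of any $y,z$-geodesic already leads away from $x$.
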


It is known that  \(G=H\square K\) is a partial cube if both $H$ and $K$ are partial cubes. We shall use the following standard observation.

\begin{lemma}\label{lem:product-semicubes}
Let \(H\) and \(K\) be partial cubes and \(G=H\square K\). Let \(E_i^H\) be a \(\Theta\)-class of \(H\). The corresponding \(H\)-direction \(\Theta\)-class of \(G\) is
\[
E_{H,i}^G=\{(h,k)(h',k)\mid hh'\in E_i^H,\ k\in V(K)\}.
\]
If \(W_i^{H,0}\) and \(W_i^{H,1}\) are the two semicubes of \(H\) associated with \(E_i^H\), then, with consistent labels, the two semicubes of \(G\) associated with \(E_{H,i}^G\) are
\[
W_{H,i}^{G,\varepsilon}=W_i^{H,\varepsilon}\times V(K)~\text{for}~ \varepsilon\in\{0,1\}.
\]
Similarly, let \(E_j^K\) be a \(\Theta\)-class of \(K\). The corresponding \(K\)-direction \(\Theta\)-class of \(G\) is
\[
E_{K,j}^G=\{(h,k)(h,k')\mid kk'\in E_j^K,\ h\in V(H)\}.
\]
If \(W_j^{K,0}\) and \(W_j^{K,1}\) are the two semicubes of \(K\) associated with \(E_j^K\), then, with consistent labels, the two semicubes of \(G\) associated with \(E_{K,j}^G\) are
\[
W_{K,j}^{G,\varepsilon}=V(H)\times W_j^{K,\varepsilon}~\text{for}~ \varepsilon\in\{0,1\}.
\]
\end{lemma}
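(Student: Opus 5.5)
The plan is to use the distance formula for Cartesian products together with the bipartite form of $\Theta$. In a partial cube, $e\,\Theta\,f$ with $e=uv$ and $f=xy$ holds if and only if $d(u,x)=d(v,y)$ and $d(u,y)=d(v,x)$. We then identify the $\Theta$-classes of $G$ and read off the semicubes from the description $W=\{a: d(x,a)<d(y,a)\}$.

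\textbf{Step 1: the candidate class is contained in one $\Theta$-class.} Every edge of $G$ is of exactly one of two types: an $H$-edge $(h,k)(h',k)$ with $hh'\in E(H)$, or a $K$-edge $(h,k)(h,k')$ with $kk'\in E(K)$. Take two $H$-edges $(h_1,k_1)(h_1',k_1)$ and $(h_2,k_2)(h_2',k_2)$. By the distance formula
\[
d_G\bigl((h_1,k_1),(h_2,k_2)\bigr)=d_H(h_1,h_2)+d_K(k_1,k_2),
\]
and the $K$-summand $d_K(k_1,k_2)$ is the same in all four distances. It therefore cancels in the $\Theta$ condition, so the two edges are in relation $\Theta$ in $G$ if and only if $h_1h_1'\,\Theta\,h_2h_2'$ in $H$. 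In particular, $E_{H,i}^G$ lies in a single $\Theta$-class.

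\textbf{Step 2: that class is exactly $E_{H,i}^G$.} Take an $H$-edge $(h,k)(h',k)$ and a $K$-edge $(g,l)(g,l')$. The two cross sums are
\[
d_H(h,g)+d_K(k,l)+d_H(h',g)+d_K(k,l')
\quad\text{and}\quad
d_H(h,g)+d_K(k,l')+d_H(h',g)+d_K(k,l).
\]
These are equal, so an $H$-edge and a $K$-edge are never in relation $\Theta$. Combined with Step 1, the $\Theta$-class of $G$ containing $E_{H,i}^G$ is exactly $E_{H,i}^G$. This also re-confirms that $G$ is a partial cube, although the paper already takes that as known.

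\textbf{Step 3: the semicubes.} Fix an edge $hh'\in E_i^H$ with $h\in W_i^{H,0}$, and choose any $k\in V(K)$. The edge $(h,k)(h',k)$ lies in $E_{H,i}^G$. For a vertex $(a,b)$ of $G$,
\[
d_G\bigl((h,k),(a,b)\bigr)<d_G\bigl((h',k),(a,b)\bigr)
\iff d_H(h,a)<d_H(h',a),
\]
again because the term $d_K(k,b)$ cancels. The right-hand condition says exactly that $a\in W_i^{H,0}$. This gives $W_{H,i}^{G,0}=W_i^{H,0}\times V(K)$, and symmetrically $W_{H,i}^{G,1}=W_i^{H,1}\times V(K)$. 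The labels are consistent because the same edge $hh'$ determines the labelling in both $H$ and $G$.

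\textbf{Step 4: the $K$-direction.} The $K$-direction statement follows by the same argument with the roles of $H$ and $K$ interchanged.

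There is no real obstacle. The only point needing care is Step 2, which shows that no $K$-edge falls into the class of an $H$-edge; this is what guarantees that the set displayed in the statement is an entire $\Theta$-class and not just part of one.
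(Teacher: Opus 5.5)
Your proposal is correct and follows the paper's route: your Step 3 is exactly the paper's proof, where the $d_K$ summand cancels and the semicube condition in $G$ reduces to $d_H(h,a)<d_H(h',a)$. Your Steps 1--2 use the same cancellation to also verify that $E_{H,i}^G$ is an entire $\Theta$-class of $G$, which the paper simply takes as the standard description of $\Theta$-classes of a Cartesian product.
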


\begin{proof}
Let \(E_i^H\) be a \(\Theta\)-class of \(H\). Choose an edge \(hh'\in E_i^H\) such that
\[
W_i^{H,0}=\{u\in V(H)\mid d_H(h,u)<d_H(h',u)\},~ \text{and} ~
W_i^{H,1}=\{u\in V(H)\mid d_H(h',u)<d_H(h,u)\}.
\]
Fix \(k\in V(K)\). For any \((u,v)\in V(G)\), by the distance formula of Cartesian product,
\[
d_G((h,k),(u,v))<d_G((h',k),(u,v))
\]
holds if and only if
\[
d_H(h,u)+d_K(k,v)<d_H(h',u)+d_K(k,v),
\]
which is equivalent to $d_H(h,u)<d_H(h',u).$ Thus, \((u,v)\) lies in the semicube of \(G\) containing \((h,k)\) precisely when \(u\in W_i^{H,0}\). Therefore, this semicube is $W_i^{H,0}\times V(K).$ The opposite semicube is consequently $W_i^{H,1}\times V(K).$ Hence, with consistent labels,
\[
W_{H,i}^{G,\varepsilon}=W_i^{H,\varepsilon}\times V(K)~\text{for}~ \varepsilon\in\{0,1\}.
\]
The proof for a \(\Theta\)-class \(E_j^K\) is analogous.
\end{proof}

\section{Construction of counterexamples}

This section presents constructive counterexamples that resolves the question posed by Klav\v{z}ar and Kov\v{s}e in the negative. The counterexamples are based on a 24-vertex graph \(B_5\) inspired by Handa \cite{Handa1999}, which is the smallest counterexample found in our current search within $Q_5$.

Let
\[
        S_5=\{00000,00001,00010,00101,11011,11100,11110,11111\}
        \subseteq V(Q_5)
\]
and define $B_5=Q_5-S_5.$ Thus, \(|V(B_5)|=24\). The graph $B_{5}$ is shown in Fig.\ref{F1}.

\begin{figure}[htbp]
    \centering
        \includegraphics[width=0.8\textwidth]{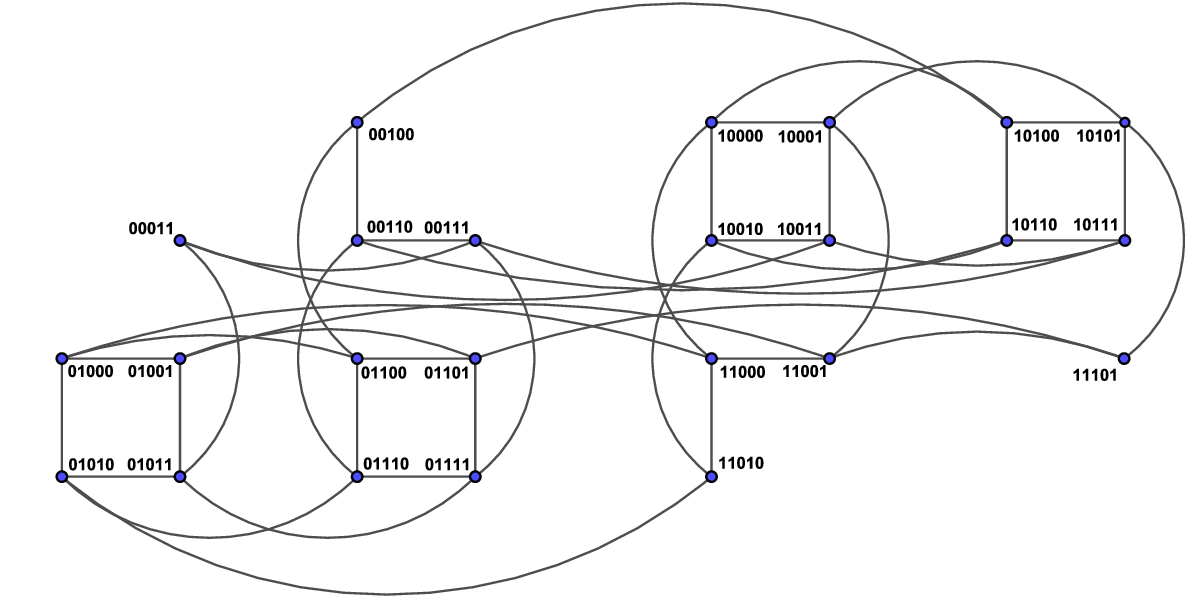}
    \caption{The graph of $B_5$.}
    \label{F1}
\end{figure}

We extend this base 24-vertex graph to an infinite family of counterexamples to the assertion that a partial cube with isomorphic opposite semicubes for every $\Theta$-class must be antipodal or harmonic-even.

For $n\geq 5$, put $k=n-5$.  Define
\[
S_n=\{00000 0^k,00001 0^k,00010 0^k,00101 0^k,
11011 1^k,11100 1^k,11110 1^k,11111 1^k\}\subseteq V(Q_n),
\]
and set $B_n=Q_n-S_n$. When $k=0$, the words $0^k$ and $1^k$ are understood to be empty, i.e., $B_5$ is exactly the 24-vertex Handa-type graph.

We first prove that deleting the eight vertices in $S_n$ does not change the hypercube distances between the remaining vertices.

\begin{lemma}\label{lem:deleted-neighborhood}
Let $n\geq 5$. If a vertex $u\in V(Q_n)\setminus S_n$ is adjacent to two vertices of $S_n$ in $Q_n$, then $u$ is one of the four vertices
\[
00011 0^k, 00100 0^k, 11010 1^k, 11101 1^k.
\]
Moreover, if the two deleted neighbors of $u$ are $u^{(i)}$ and $u^{(j)}$, then $u^{(i,j)}\in S_n$. In particular, every vertex outside $S_n$ has at most two deleted neighbors.
\end{lemma}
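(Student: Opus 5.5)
A vertex $u$ outside $S_n$ with two deleted neighbours $a,b\in S_n$ satisfies $d_{Q_n}(a,b)=2$ and $u\in I_{Q_n}(a,b)$. So the plan is to list all pairs of elements of $S_n$ at Hamming distance $2$, find their common neighbours, and discard those common neighbours that already lie in $S_n$.

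First, the pairs. Write $s_1=000000^k$, $s_2=000010^k$, $s_3=000100^k$, $s_4=001010^k$, and let $t_1,\dots,t_4$ be the last four elements of $S_n$. Every $s_i$ has five-bit prefix of weight at most $2$, and every $t_j$ has prefix of weight at least $3$. When $k\ge1$ the tails $0^k$ and $1^k$ differ in $k$ coordinates, so $d(s_i,t_j)\ge 1+k\ge 2$, and the only way to get exactly $2$ is a prefix of weight $2$ against one of weight $3$. One checks that this does not occur: $s_4$ has prefix $00101$, and its distance to each of $11011,11100,11110,11111$ is at least $3$. For $k=0$ the same direct check shows that every cross distance is at least $3$. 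Hence distance-$2$ pairs occur only inside $\{s_1,\dots,s_4\}$ or inside $\{t_1,\dots,t_4\}$. Among the $s_i$ they are $\{s_1,s_4\}$ and $\{s_2,s_3\}$; the pairs $\{s_1,s_2\}$, $\{s_1,s_3\}$ and $\{s_2,s_4\}$ are at distance $1$, and $\{s_3,s_4\}$ is at distance $3$. The complement map $x\mapsto \bar x$ sends $\{s_i\}$ to $\{t_j\}$, so the $t$-side gives the complementary pairs $\{t_4,t_1\}$ and $\{t_3,t_2\}$.

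Second, the common neighbours. Each distance-$2$ pair has exactly two common neighbours in $Q_n$.
\begin{itemize}
\item For $\{00000,00101\}$ they are $00100$ and $00001$; the latter is $s_2\in S_n$.
\item For $\{00001,00010\}$ they are $00000=s_1$ and $00011$.
\end{itemize}
So the surviving vertices are $001000^k$ and $000110^k$, and by complementation also $110111^k$ and $111011^k$. This is exactly the list in the statement.

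For the \emph{moreover} part, note that each of these four vertices arose from a unique pair. For instance, $u=00011\,0^k$ has deleted neighbours $u^{(4)}=00001\,0^k$ and $u^{(5)}=00010\,0^k$, and $u^{(4,5)}=00000\,0^k\in S_n$. In every case the flip of both coordinates is the other common neighbour of the pair, which lies in $S_n$ by the computation above.

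For the final claim, suppose some $u$ had three deleted neighbours. Then each of the three pairs among them would be a distance-$2$ pair having $u$ as common neighbour. But each of the four listed vertices is the common neighbour of only one pair, so this is impossible.

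The only real difficulty is bookkeeping. The key point to verify is the absence of distance-$2$ pairs between the $s$-side and the $t$-side, including the case $k=0$. Complement symmetry halves the remaining work.
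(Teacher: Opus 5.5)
\textbf{Gap in the $t$-side of the case check.} Your strategy matches the paper's: list the pairs in $S_n$ at Hamming distance $2$, take their two common neighbours, and discard those lying in $S_n$. Your weight argument for the cross pairs is correct, and so is the $s$-side computation. The step that fails is the transfer to the $t$-side, because the complement map $x\mapsto\bar x$ does \emph{not} preserve $S_n$:
\begin{itemize}
\item $\overline{00010\,0^k}=11101\,1^k$ and $\overline{00101\,0^k}=11010\,1^k$ are not deleted; they are two of the four vertices in the statement.
\item Conversely, $\overline{00100\,0^k}=11011\,1^k$ and $\overline{00011\,0^k}=11100\,1^k$ both lie in $S_n$.
\end{itemize}

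\textbf{Consequences.} With $t_1,\dots,t_4=11011\,1^k,11100\,1^k,11110\,1^k,11111\,1^k$, your pairs $\{t_4,t_1\}$ and $\{t_3,t_2\}$ are at distance $1$, not $2$. The actual distance-$2$ pairs are $\{t_1,t_3\}$ and $\{t_2,t_4\}$. Complementing your $s$-side survivors gives two deleted vertices. Indeed, your written list contains $110111^k=11011\,1^k\in S_n$ instead of $11010\,1^k$. So ``This is exactly the list in the statement'' does not follow from what precedes it. The \emph{moreover} part and the at-most-two claim rest on ``the computation above'' and ``each listed vertex is the common neighbour of only one pair'', so they inherit the same gap on the $t$-side.

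\textbf{The fix.} Replace complementation with the map $\tau$ that complements every coordinate except the fifth, i.e.\ translation by $11110\,1^k$; this is the map used in the proof of Proposition~\ref{prop:semicubes}.
\begin{itemize}
\item $\tau$ sends $s_1,s_2,s_3,s_4$ to $t_3,t_4,t_2,t_1$, so it maps $S_n$ onto itself.
\item It carries $\{s_1,s_4\}$ and $\{s_2,s_3\}$ to $\{t_1,t_3\}$ and $\{t_2,t_4\}$.
\item It carries the survivors $00100\,0^k$ and $00011\,0^k$ to $11010\,1^k$ and $11101\,1^k$.
\item Since $\tau$ is a translation, it satisfies $\tau(u^{(i)})=\tau(u)^{(i)}$, so it also transfers the \emph{moreover} statement.
\end{itemize}
Alternatively, check the $t$-side directly: $\{11011\,1^k,11110\,1^k\}$ has common neighbours $11111\,1^k\in S_n$ and $11010\,1^k$. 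Likewise $\{11100\,1^k,11111\,1^k\}$ has common neighbours $11110\,1^k\in S_n$ and $11101\,1^k$. With this correction the rest of your argument goes through. That includes your uniqueness-of-pair argument for ``at most two deleted neighbours'', which the paper leaves implicit.
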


\begin{proof}
A vertex of $Q_n$ can be adjacent to two deleted vertices only if those two deleted vertices are at Hamming distance $2$. 
A direct inspection of $S_n$ shows that the unordered pairs of deleted vertices at Hamming distance $2$ are precisely those listed in Table~\ref{tab:common-neighbors}.

\begin{table}[htbp]
\centering
\caption{Common neighbors inside and outside $S_n$.}
\label{tab:common-neighbors}
\begin{tabular}{c|c|c}
\hline\hline
pair in $S_n$ & common neighbor outside $S_n$ & other common neighbor in $S_n$ \\
\hline
$\{00001 0^k,00010 0^k\}$ & $00011 0^k$ & $00000 0^k$ \\
\hline
$\{00000 0^k,00101 0^k\}$ & $00100 0^k$ & $00001 0^k$ \\
\hline
$\{11110 1^k,11011 1^k\}$ & $11010 1^k$ & $11111 1^k$ \\
\hline
$\{11111 1^k,11100 1^k\}$ & $11101 1^k$ & $11110 1^k$\\
\hline\hline
\end{tabular}
\end{table}

All other pairs in $S_n$ have Hamming distance different from $2$. Hence, the only vertices outside $S_n$ adjacent to two deleted vertices are precisely the four vertices displayed in the middle column. For each row, the vertex listed in the final column is obtained by changing the two corresponding coordinates of the vertex in the middle column. This proves the assertion.
\end{proof}

\begin{proposition}\label{prop:isometric}
For every $n\geq 5$, the graph $B_n$ is a partial cube.
\end{proposition}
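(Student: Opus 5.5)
We will show that $B_n$ is an isometric subgraph of $Q_n$, i.e.\ that $d_{B_n}(u,v)=d_{Q_n}(u,v)$ for all $u,v\in V(B_n)$. Connectivity will then be automatic, and $B_n$ is an induced subgraph of $Q_n$ by definition, so by the definition of partial cube this suffices. Since $d_{B_n}\ge d_{Q_n}$ always holds, it is enough to prove the following claim: for any two distinct vertices $u,v\notin S_n$ there is a neighbor $w$ of $u$ with $w\notin S_n$ and $d_{Q_n}(w,v)=d_{Q_n}(u,v)-1$. Induction on $d_{Q_n}(u,v)$ then produces a $u,v$-geodesic of $Q_n$ that avoids $S_n$.

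To prove the claim, let $D=\{i: u_i\ne v_i\}$, so $|D|=d_{Q_n}(u,v)\ge1$. The candidates for $w$ are $u^{(i)}$ with $i\in D$, and we need one of them outside $S_n$.
\begin{itemize}
\item If $|D|\ge3$, at least three of these candidates are neighbors of $u$, and by Lemma~\ref{lem:deleted-neighborhood} at most two of them can be deleted.
\item If $|D|=1$, then $w=v\notin S_n$.
\item The only remaining case is $|D|=2$, say $D=\{i,j\}$, with both $u^{(i)}$ and $u^{(j)}$ in $S_n$. Lemma~\ref{lem:deleted-neighborhood} then gives $u^{(i,j)}\in S_n$. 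But $u^{(i,j)}=v\notin S_n$, a contradiction.
\end{itemize}

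So the whole argument rests on Lemma~\ref{lem:deleted-neighborhood}. In particular, its ``moreover'' clause, that the two deleted neighbors of $u$ span a square whose fourth vertex is also deleted, is exactly what rules out the case $|D|=2$. That lemma has already been verified through Table~\ref{tab:common-neighbors}, so I expect no real obstacle. The only care needed is to check that the bound on deleted neighbors applies to every vertex outside $S_n$, which the ``in particular'' clause states.
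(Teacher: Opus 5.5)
Your proposal is correct and follows essentially the same route as the paper: induction on the Hamming distance, using Lemma~\ref{lem:deleted-neighborhood} to show that some neighbor $u^{(i)}$ with $i\in D$ survives. The bound of at most two deleted neighbors rules out $|D|\ge 3$, the case $|D|=1$ is immediate, and the square-closure clause $u^{(i,j)}\in S_n$ rules out $|D|=2$, exactly as in the paper's argument.
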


\begin{proof}
Let $x,y\in V(B_n)$. We prove by induction on $r=d_{Q_n}(x,y)$ that there is an $x,y$-path of length $r$ in $B_n$. The case $r=0$ is trivial.

Assume $r>0$ and put $R=\{i\in\{1,\ldots,n\}:x_i\neq y_i\}.$ Now we show that there exists some $i\in R$ such that $x^{(i)}\notin S_n$. Suppose not. Then, every vertex $x^{(i)}$, $i\in R$, is deleted. By Lemma~\ref{lem:deleted-neighborhood}, the vertex $x\notin S_n$ has at most two deleted neighbors. Hence, $|R|\leq 2$. If $|R|=1$, say $R=\{i\}$, then $y=x^{(i)}\in S_n$, contradicting $y\in V(B_n)$. If $|R|=2$, say $R=\{i,j\}$, then $x^{(i)},x^{(j)}\in S_n$, and Lemma~\ref{lem:deleted-neighborhood} gives $x^{(i,j)}\in S_n$. But $y=x^{(i,j)}$, again a contradiction. Therefore, there is some $i\in R$ such that $x^{(i)}\notin S_n$.

Pick an $i\in R$ such that $x^{(i)}\in V(B_n)$. Then, $d_{Q_n}(x^{(i)},y)=r-1.$ By the induction hypothesis, there is an $x^{(i)},y$-path of length $r-1$ in $B_n$. Adding the edge $xx^{(i)}$ gives an $x,y$-path of length $r$ in $B_n$. Therefore, $d_{B_n}(x,y)\leq r=d_{Q_n}(x,y).$ The reverse inequality is trivial because $B_n$ is an induced subgraph of $Q_n$. Hence, $d_{B_n}(x,y)=d_{Q_n}(x,y)$ for all $x,y\in V(B_n)$, and $B_n$ is isometric in $Q_n$.
\end{proof}

We next show that every pair of opposite semicubes of $B_n$ is isomorphic. The proof uses explicit hypercube automorphisms preserving the deletion set.

\begin{proposition}\label{prop:semicubes}
For every \(n\geq 5\) and every \(i\in\{1,\ldots,n\}\), $B_n[W_i^{0}]\cong B_n[W_i^{1}].$
\end{proposition}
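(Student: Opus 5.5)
The plan is to realize each required isomorphism as an automorphism of \(Q_n\) that maps \(S_n\) onto itself. By Proposition~\ref{prop:isometric}, \(B_n\) is isometric in \(Q_n\). Every coordinate \(i\) carries edges of \(B_n\), since \(B_n\) contains a vertex \(u\) and the neighbor \(u^{(i)}\) (there are plenty of such pairs). Hence the \(\Theta\)-classes of \(B_n\) are exactly the coordinate classes, and
\[
W_i^{\varepsilon}=\{x\in V(B_n): x_i=\varepsilon\}.
\]
An automorphism \(\varphi\) of \(Q_n\) with \(\varphi(S_n)=S_n\) restricts to an automorphism of \(B_n\). If in addition \(\varphi\) maps the halfspace \(\{x_i=0\}\) onto \(\{x_i=1\}\), then it restricts to an isomorphism \(B_n[W_i^0]\to B_n[W_i^1]\). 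More generally, it suffices that \(\varphi\) maps \(\{x_i=0\}\) onto \(\{x_j=\delta\}\), and that some already established isomorphism for coordinate \(j\) can then be composed with it.

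The first automorphism is \(\varphi=\tau\circ c\). Here \(c\) is complementation of all \(n\) coordinates, and \(\tau\) transposes coordinates \(3\) and \(4\). Complementation sends
\[
000000^k,\ 000010^k,\ 000100^k,\ 001010^k \;\mapsto\; 111111^k,\ 111101^k,\ 111011^k,\ 110101^k,
\]
and sends
\[
110111^k,\ 111001^k,\ 111101^k,\ 111111^k \;\mapsto\; 001000^k,\ 000110^k,\ 000010^k,\ 000000^k.
\]
Applying \(\tau\) to these images gives back exactly the eight words of \(S_n\); for example \(111011^k\mapsto 110111^k\), \(110101^k\mapsto 111001^k\), \(001000^k\mapsto 000100^k\) and \(000110^k\mapsto 001010^k\). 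Thus \(\varphi(S_n)=S_n\). Since \(\varphi\) complements every coordinate \(i\notin\{3,4\}\) in place, it swaps the two halfspaces of each such coordinate. This settles every \(i\in\{1,2,5,\ldots,n\}\), and in particular all the padding coordinates \(i\ge 6\), because complementation exchanges \(0^k\) and \(1^k\).

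For \(i\in\{3,4\}\), the map \(\varphi\) sends \(\{x_3=0\}\) to \(\{x_4=1\}\) and \(\{x_4=0\}\) to \(\{x_3=1\}\). So it remains to find a second automorphism \(\psi\) of \(Q_n\) stabilizing \(S_n\) that interchanges the roles of coordinates \(3\) and \(4\). Concretely, \(\psi\) should send \(\{x_4=1\}\) to \(\{x_3=1\}\), or alternatively \(\{x_3=0\}\) to \(\{x_4=0\}\), or some combination achieving the same effect. Then \(\psi\circ\varphi\), suitably arranged, carries \(W_3^0\) onto \(W_3^1\), and the symmetric composite handles \(W_4\).

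Finding \(\psi\) is the main obstacle. I would first compute the stabilizer of \(S_5\) in \(\operatorname{Aut}(Q_5)\), consisting of signed coordinate permutations \(x\mapsto\pi(x)\oplus a\), by matching the distance structure of \(S_5\). Table~\ref{tab:common-neighbors} and the weight pattern of \(S_5\) constrain \(\pi\) heavily, so this search is short. The resulting maps would then be extended to \(Q_n\) by acting on the last \(k\) coordinates by the identity, or by complementation when \(a\) complements the block.

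If no such \(\psi\) exists in the stabilizer, I would fall back on a direct isomorphism argument for coordinates \(3\) and \(4\) in \(B_5\). Writing \(Q_n=Q_5\,\square\,Q_k\), each semicube \(B_n[W_i^\varepsilon]\) for \(i\le5\) equals \(Q_n[\{x_i=\varepsilon\}]\) minus four vertices of \(S_n\). One then compares the two four-vertex deletion patterns inside \(Q_{n-1}\). For \(i=3\), the deleted sets are \(\{000000^k,000010^k,000100^k,110111^k\}\) and \(\{001010^k,111001^k,111101^k,111111^k\}\). After dropping coordinate \(3\) and complementing, these can be matched by a suitable automorphism of \(Q_{n-1}\), and this matching would be exhibited explicitly.
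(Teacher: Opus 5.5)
Your framework is the paper's own: restrict an automorphism of $Q_n$ that fixes $S_n$ setwise and flips coordinate $i$. Your map $\varphi=\tau\circ c$ does satisfy $\varphi(S_n)=S_n$, so it correctly settles every $i\notin\{3,4\}$, including $i=5$. The gap is that coordinates $3$ and $4$ are never actually handled. We have $\varphi(x)_3=1-x_4$ and $\varphi(x)_4=1-x_3$, and $\varphi$ is an involution. So $\varphi$ only gives $B_n[W_3^0]\cong B_n[W_4^1]$ and $B_n[W_4^0]\cong B_n[W_3^1]$. Your remark about composing with an already established isomorphism for coordinate $j$ is circular here: for $i=3$ the relevant $j$ is $4$, which is just as open. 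You then defer the needed $\psi$ to a stabilizer computation that is not carried out. The fallback matching in $Q_{n-1}$ is announced but not exhibited either. As written, the proposition is unproved for $i\in\{3,4\}$.

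The missing map is simple. Let $\psi$ transpose coordinates $3$ and $4$, complement coordinate $5$, and fix all other coordinates. It pairs $00000\,0^k\leftrightarrow 00001\,0^k$, $00010\,0^k\leftrightarrow 00101\,0^k$, $11011\,1^k\leftrightarrow 11100\,1^k$ and $11110\,1^k\leftrightarrow 11111\,1^k$, so $\psi(S_n)=S_n$. Since $\psi(x)_3=x_4$, it carries $W_4^1$ onto $W_3^1$. Moreover, $\psi\circ\varphi$ is just complementation of every coordinate except the fifth, which exchanges the two halfspaces of coordinate $3$ and of coordinate $4$ in place.

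This is exactly the paper's proof. It uses complementation of all coordinates other than the fifth for every $i\neq 5$, and $\psi$ (its $\sigma$) for $i=5$. Your $\varphi$ is the composite of those two maps, so once $\psi$ is added your argument and the paper's coincide.
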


\begin{proof}
By Proposition~\ref{prop:isometric}, \(B_n\) is an isometric subgraph of \(Q_n\). Hence, the \(\Theta\)-classes of \(B_n\) are inherited from the coordinate directions of \(Q_n\). We label them so that the \(\Theta\)-class \(E_i\) consists of the edges of \(B_n\) whose endpoints differ in the \(i\)-th coordinate. With this labeling, the two semicubes of \(B_n\) associated with \(E_i\) are precisely
\[
W_i^{0}=V(B_n)\cap\{x_i=0\}
~\text{and}~
W_i^{1}=V(B_n)\cap\{x_i=1\}.
\]

Now, we prove that, for every coordinate $i\in\{1,\ldots,n\}$, there is an automorphism of $B_n$ that interchanges the two coordinate halfspaces $V(B_n)\cap\{x_i=0\}~\text{and}~V(B_n)\cap\{x_i=1\}$.

First suppose $i\neq 5$. Let $\tau$ be the automorphism of $Q_n$ defined by
\[
\tau(x)_5=x_5,
\tau(x)_j=1-x_j~\text{for }j\neq 5.
\]
The map $\tau$ pairs the deleted vertices as follows:
\[
00000 0^k\leftrightarrow 11110 1^k,
~
00001 0^k\leftrightarrow 11111 1^k,
~
00010 0^k\leftrightarrow 11100 1^k,
~
00101 0^k\leftrightarrow 11011 1^k.
\]
Then, $\tau$ maps $S_n$ onto itself. Thus, $\tau$ restricts to an automorphism of $B_n$. Since $i\neq 5$, the map $\tau$ changes the $i$-th coordinate, and hence it interchanges the two $i$-coordinate halfspaces of $B_n$.

For $i=5$, let $\sigma$ be the automorphism of $Q_n$ obtained by interchanging coordinates $3$ and $4$ and, at the same time, complementing coordinate $5$; all other coordinates are fixed.  Explicitly, $\sigma(x)_3=x_4, \sigma(x)_4=x_3,\sigma(x)_5=1-x_5,$ and $\sigma(x)_j=x_j$ for $j\notin\{3,4,5\}$. A direct check gives
\[
00000 0^k\leftrightarrow 00001 0^k,
~
00010 0^k\leftrightarrow 00101 0^k,
~
11011 1^k\leftrightarrow 11100 1^k,
~
11110 1^k\leftrightarrow 11111 1^k.
\]
Hence, $\sigma(S_n)=S_n$, so $\sigma$ restricts to an automorphism of $B_n$. Since $\sigma$ changes the fifth coordinate, it interchanges the two fifth-coordinate halfspaces of $B_n$.

Thus, \(B_n\) is opposite-semicube-isomorphic.
\end{proof}

We now show that the graphs $B_n$ are not antipodal.

\begin{proposition}\label{prop:not-antipodal}
For every $n\geq 5$, the graph $B_n$ is not antipodal.
\end{proposition}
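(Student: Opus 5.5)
The plan is to exhibit one vertex of $B_n$ that has no absolute antipode. The introduction notes that if $G$ is antipodal, then the unique relative antipode of every vertex is an absolute antipode. So such a vertex rules out antipodality directly, with no need to go through Theorem~\ref{T2.1}.

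\emph{Step 1: the diameter.} I will show that $\operatorname{diam}(B_n)=n$. By Proposition~\ref{prop:isometric}, $d_{B_n}=d_{Q_n}$ on $V(B_n)$, so every distance is at most $n$. Equality is attained exactly by pairs of complementary words that both lie outside $S_n$. One such pair is $01000\,0^k$ and $10111\,1^k$. Neither is in $S_n$: the first word ends in $0^k$, and its first five bits $01000$ are not among $00000,00001,00010,00101$. The second ends in $1^k$, and $10111$ is not among $11011,11100,11110,11111$. (For $k=0$ the same check works on $5$-bit words.) Hence $\operatorname{diam}(B_n)=n$.

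\emph{Step 2: a vertex with no absolute antipode.} Take $x=11101\,1^k$. It is not in $S_n$, since $11101\notin\{11011,11100,11110,11111\}$, so $x\in V(B_n)$. Again by isometry, any vertex at distance $n$ from $x$ must differ from $x$ in every coordinate. The only such word of $Q_n$ is the complement $00010\,0^k$, which belongs to $S_n$. So no vertex of $B_n$ is at distance $\operatorname{diam}(B_n)$ from $x$, and $x$ has no absolute antipode.

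\emph{Step 3: conclusion.} If $B_n$ were antipodal, the unique relative antipode of $x$ would be an absolute antipode of $x$, contradicting Step~2. Hence $B_n$ is not antipodal. By Theorem~\ref{T2.1} it is therefore not harmonic-even either, and indeed Step~2 already shows directly that $B_n$ is not even.

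The only step needing real care is Step~1. One has to confirm that the diameter is genuinely $n$, because if it were smaller a vertex at distance $n$ would not be required. One also has to do the membership bookkeeping for the suffixes $0^k$ and $1^k$ uniformly in $n$. Both reduce to inspecting the first five bits against the explicit list $S_n$.
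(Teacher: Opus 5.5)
Your proof is correct, but it takes a different route from the paper's.

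\textbf{The paper's route.} It uses the same vertex $x=11101\,1^k$ and exhibits two distinct relative antipodes, $y=00011\,0^k$ and $z=00110\,0^k$. Each agrees with $x$ in exactly one coordinate, and flipping that coordinate lands on the deleted complement $00010\,0^k\in S_n$. This contradicts the definition of antipodality directly and needs no diameter computation.

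\textbf{Your route.} You show that $x$ has no absolute antipode. This requires checking $\operatorname{diam}(B_n)=n$ through a surviving complementary pair. You then invoke the introduction's remark that in an antipodal graph the relative antipode is absolute.

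That remark is true, but it is the one non-formal ingredient of your argument, so it is worth knowing why it holds. Start from any vertex $w$ and repeatedly step to a neighbor farther from $x$. This must stop at a relative antipode, which is $\bar x$, after $d(x,\bar x)-d(x,w)$ steps. Hence $I(x,\bar x)=V(G)$ for every $x$. Now take a diametral pair $u,v$, so $v=\bar u$. Since $u,v\in I(x,\bar x)$ and $x,\bar x\in I(u,v)$, you get $2d(x,\bar x)=[d(x,u)+d(x,v)]+[d(\bar x,u)+d(\bar x,v)]=2\operatorname{diam}(G)$.

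\textbf{What each buys.} Your argument gives a stronger conclusion: $B_n$ is not even. So non-harmonic-evenness follows without Polat's theorem. The paper's argument stays at the level of the definition and pinpoints the failure of uniqueness of relative antipodes. That failure is the phenomenon it later ties to the breakdown of the opposite-semicube Helly property.
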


\begin{proof}
Let $k=n-5$ and denote $x=111011^k,~y=000110^k,~z=00110\,0^k.$  All of these three vertices belong to $V(B_n)$. Since $B_n$ is isometric in $Q_n$, distances in $B_n$ are the corresponding Hamming distances. In particular, $d_{B_n}(x,y)=d_{B_n}(x,z)=n-1.$

The vertices $x$ and $y$ agree only in coordinate $5$. The vertex
obtained from $y$ by changing coordinate $5$ is $y^{(5)}=000100^k,$ which belongs to $S_n$. Hence, every neighbor $y'$ of $y$ in $B_n$ is obtained by changing a coordinate in which $x$ and $y$ differ. Thus,
\[
        d_{B_n}(x,y')=n-2<n-1=d_{B_n}(x,y)
\]
for every neighbor $y'$ of $y$. Thus, $y$ is a relative antipode of $x$.

Similarly, the vertices $x$ and $z$ agree only in coordinate $3$, and $z^{(3)}=000100^k\in S_n.$ Therefore, every neighbor $z'$ of $z$ in $B_n$ is obtained by changing a coordinate in which $x$ and $z$ differ. Hence,
\[
        d_{B_n}(x,z')=n-2<n-1=d_{B_n}(x,z)
\]
for every neighbor $z'$ of $z$. Thus, $z$ is also a relative antipode of $x$.

Since $y\ne z$, the vertex $x$ has at least two distinct relative
antipodes. Therefore, $B_n$ is not antipodal.
\end{proof}

Combined with the preceding results, the main theorem is obtained as follows.

\begin{theorem}\label{thm:main1}
For every $n\geq 5$, the graph
\[
\begin{aligned}
B_n=Q_n-\{&00000 0^{n-5},00001 0^{n-5},00010 0^{n-5},00101 0^{n-5},\\
&11011 1^{n-5},11100 1^{n-5},11110 1^{n-5},11111 1^{n-5}\}
\end{aligned}
\]
is a partial cube with pairwise isomorphic opposite semicubes, but $B_n$ is not antipodal, and equivalently, not harmonic-even.
\end{theorem}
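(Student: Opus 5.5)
The theorem is an assembly of the three preceding propositions, so the proof is short. First, Proposition~\ref{prop:isometric} gives that $B_n$ is isometric in $Q_n$, hence a partial cube. Since $B_n$ is isometric in $Q_n$, every coordinate direction $i\in\{1,\ldots,n\}$ actually occurs as an edge of $B_n$; for instance $000110^k$ and its neighbour across coordinate $i$ can be checked directly. Hence the $\Theta$-classes of $B_n$ are exactly the $n$ coordinate classes, and the semicubes are the coordinate halfspaces. This identification is the step I would state carefully, because Proposition~\ref{prop:semicubes} uses it. Strictly, I would also verify that no two coordinates induce the same partition of $V(B_n)$ and that each halfspace is nonempty. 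Both follow because each $i$ has a $B_n$-edge, and in a partial cube distinct coordinates carrying edges give distinct $\Theta$-classes.

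Second, I would invoke Proposition~\ref{prop:semicubes}. For each coordinate $i$ it supplies an automorphism of $B_n$: the map $\tau$ when $i\neq 5$ and $\sigma$ when $i=5$. Each maps $S_n$ onto itself and swaps the halfspaces $\{x_i=0\}$ and $\{x_i=1\}$. Its restriction to $W_i^0$ is therefore an isomorphism $B_n[W_i^0]\cong B_n[W_i^1]$, so $B_n$ is opposite-semicube-isomorphic.

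Third, Proposition~\ref{prop:not-antipodal} exhibits the vertex $x=111011^k$ with two distinct relative antipodes, $y=000110^k$ and $z=001100^k$. So $B_n$ is not antipodal.

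Finally, the equivalence ``not antipodal $\iff$ not harmonic-even'' for partial cubes is the last clause of Theorem~\ref{T2.1} (Polat). Combining the four items gives the statement. I expect no real obstacle. The only points needing care are the identification of $\Theta$-classes with coordinates when $n>5$, and confirming that $x,y,z\notin S_n$; both are immediate inspections.
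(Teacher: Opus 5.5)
Your proposal is correct and follows the paper's proof exactly: the theorem is just the combination of Propositions~\ref{prop:isometric}, \ref{prop:semicubes} and \ref{prop:not-antipodal} with the last clause of Theorem~\ref{T2.1}. One small slip in your side check: $00011\,0^k$ does not witness edges in every coordinate direction, because it is one of the four vertices of Lemma~\ref{lem:deleted-neighborhood} and its neighbours across coordinates $4$ and $5$ ($00001\,0^k$ and $00010\,0^k$) are deleted; instead, note that at most $8$ of the $2^{n-1}\ge 16$ pairwise disjoint direction-$i$ edges of $Q_n$ meet $S_n$, so every direction still carries an edge of $B_n$.
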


\section{Cartesian product extensions}

The family of graphs $\left\{B_n\right\}_{n=5}^{\infty}$ already gives infinitely many counterexamples. We record the product construction because it yields additional graph families.

\begin{lemma}\label{lem:product-semicubes2}
Let \(G\) and \(X\) be partial cubes. If both \(G\) and \(X\) are opposite-semicube-isomorphic, then \(Y=G\square X\) is also
opposite-semicube-isomorphic.
\end{lemma}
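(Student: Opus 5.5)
The plan is to describe all $\Theta$-classes of $Y=G\square X$ and then build the required isomorphism for each of them from the corresponding isomorphism in one factor. The standard fact is that every $\Theta$-class of a Cartesian product of partial cubes is either an $H$-direction class $E^Y_{G,i}$ or a $K$-direction class $E^Y_{X,j}$, as in Lemma~\ref{lem:product-semicubes}, with $H=G$ and $K=X$. One way to see this is from the distance formula: an edge $(g,x)(g',x)$ and an edge $(h,y)(h,y')$ are never in relation $\Theta$, because
\[
d_Y((g,x),(h,y))+d_Y((g',x),(h,y'))=d_Y((g,x),(h,y'))+d_Y((g',x),(h,y)).
\]
Two edges in the same direction are in relation $\Theta$ in $Y$ exactly when their projections are in relation $\Theta$ in the corresponding factor. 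So the $\Theta$-classes of $Y$ are exactly the sets $E^Y_{G,i}$ and $E^Y_{X,j}$. I would record this briefly, since it is standard (see \cite{HammackImrichKlavzar2011}).

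Next, fix a class $E^Y_{G,i}$. By Lemma~\ref{lem:product-semicubes}, its two semicubes are $W_i^{G,\varepsilon}\times V(X)$ for $\varepsilon\in\{0,1\}$. The subgraph of $Y$ induced on $A\times V(X)$ is the Cartesian product $G[A]\square X$; this follows directly from the definition of adjacency in a Cartesian product. Therefore
\[
Y[W_{G,i}^{Y,\varepsilon}]=G[W_i^{G,\varepsilon}]\,\square\,X .
\]
By hypothesis there is an isomorphism $\varphi\colon G[W_i^{G,0}]\to G[W_i^{G,1}]$. The map $(g,x)\mapsto(\varphi(g),x)$ is then an isomorphism $G[W_i^{G,0}]\square X\to G[W_i^{G,1}]\square X$, because it preserves both kinds of adjacency. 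The $X$-direction classes are handled symmetrically, using $V(G)\times W_j^{X,\varepsilon}$ and the map $(g,x)\mapsto(g,\psi(x))$.

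No step is really hard. The only point needing care is the completeness claim that $\Theta$-classes of $Y$ come only from the two factor directions. Lemma~\ref{lem:product-semicubes} describes the classes $E^Y_{G,i}$ and $E^Y_{X,j}$, but it does not state that these exhaust all classes. I would settle this with the short computation above, or alternatively by counting: the classes $E^Y_{G,i}$ and $E^Y_{X,j}$ partition $E(Y)$ and are each contained in single $\Theta$-classes, and no $\Theta$-class meets two of them, so they are exactly the $\Theta$-classes of $Y$.
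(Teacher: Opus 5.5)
Your proposal is correct and follows essentially the same route as the paper. Both arguments identify the semicubes of each factor-direction class via Lemma~\ref{lem:product-semicubes}, note that $Y[W\times V(X)]=G[W]\square X$, and lift the factor isomorphism by $(g,x)\mapsto(\varphi(g),x)$. Your distance-formula check that the $\Theta$-classes of $Y$ are exactly the classes $E^Y_{G,i}$ and $E^Y_{X,j}$ is a welcome addition, since the paper relies on this only implicitly when it concludes ``for every $\Theta$-class of $Y$''.
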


\begin{proof}
Let \(Y=G\square X\). Since the Cartesian product of partial cubes is a partial cube, \(Y\) is a partial cube.

First consider a \(\Theta\)-class \(E_i^G\) of \(G\). By Lemma~\ref{lem:product-semicubes}, the corresponding \(G\)-direction \(\Theta\)-class of \(Y\) has semicubes
\[
W_{G,i}^{Y,\varepsilon}=W_i^{G,\varepsilon}\times V(X)~\text{for}
~ \varepsilon\in\{0,1\}.
\]
Since \(G\) is opposite-semicube-isomorphic, we have $G[W_i^{G,0}]\cong G[W_i^{G,1}].$ Taking the Cartesian product with \(X\), we obtain $G[W_i^{G,0}]\square X \cong G[W_i^{G,1}]\square X.$ Moreover,
\[
Y[W_i^{G,\varepsilon}\times V(X)]
\cong
G[W_i^{G,\varepsilon}]\square X
~\text{for}~\varepsilon\in\{0,1\}.\]
Hence, $Y[W_{G,i}^{Y,0}]\cong
Y[W_{G,i}^{Y,1}].$

The argument for a \(\Theta\)-class \(E_j^X\) of \(X\) is analogous. Thus, for every \(\Theta\)-class of \(Y=G\square X\), the two opposite semicubes induce isomorphic subgraphs. Hence, \(Y=G\square X\) is opposite-semicube-isomorphic.
\end{proof}

\begin{theorem}\label{thm:product-counterexamples}
Let $G$ be an opposite-semicube-isomorphic partial cube that is not antipodal, and let $X$ be an antipodal opposite-semicube-isomorphic partial cube. Then $G\square X$ is an opposite-semicube-isomorphic partial cube that is not antipodal.
\end{theorem}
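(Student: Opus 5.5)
The statement has two parts. The first part, that $G\square X$ is an opposite-semicube-isomorphic partial cube, follows at once from Lemma~\ref{lem:product-semicubes2}. The real content is the second part: $G\square X$ is not antipodal. I would argue by contradiction. Assume $G\square X$ is antipodal, and deduce that $G$ is antipodal, which contradicts the hypothesis.

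The cleanest route goes directly through relative antipodes, using the distance formula $d_{G\square X}((g,h),(g',h'))=d_G(g,g')+d_X(h,h')$. Since $G$ is not antipodal, some vertex $g\in V(G)$ has two distinct relative antipodes $a\neq b$ in $G$. (Every vertex of a finite connected graph has at least one relative antipode, e.g.\ any vertex at maximum distance, so "not antipodal" means some vertex has at least two.) Fix any $h\in V(X)$ and a relative antipode $c$ of $h$ in $X$; we may take $c=\bar h$ since $X$ is antipodal, though any relative antipode works.

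The key step is the following claim: if $a$ is a relative antipode of $g$ in $G$ and $c$ is a relative antipode of $h$ in $X$, then $(a,c)$ is a relative antipode of $(g,h)$ in $G\square X$. To check it, take any neighbor of $(a,c)$. It has the form $(a',c)$ with $aa'\in E(G)$, or $(a,c')$ with $cc'\in E(X)$. In the first case,
\[
d((g,h),(a',c))=d_G(g,a')+d_X(h,c)\le d_G(g,a)+d_X(h,c)=d((g,h),(a,c)),
\]
and the second case is symmetric. So $(a,c)$ and $(b,c)$ are two distinct relative antipodes of $(g,h)$, and $G\square X$ is not antipodal.

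This argument never uses that $X$ is antipodal or opposite-semicube-isomorphic beyond supplying the first conclusion. The only point needing care is the existence of a relative antipode of $h$ in $X$, and finiteness settles it. I expect no real obstacle. An alternative route would use the $\Theta$-faithful automorphism criterion of Theorem~\ref{T2.1} and show that such an automorphism of the product restricts to $G$, but that requires handling automorphisms that mix factors, so the direct relative-antipode argument is preferable.
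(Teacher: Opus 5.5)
Your proposal is correct and follows essentially the paper's proof: Lemma~\ref{lem:product-semicubes2} gives the first part, and the product distance formula shows that $(a,\bar h)$ and $(b,\bar h)$ are two distinct relative antipodes of $(g,h)$. You are also right that any relative antipode of $h$ would do, so antipodality of $X$ is not needed; and the \emph{by contradiction} framing is superfluous, since the argument you actually give is direct.
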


\begin{proof}
By Lemma~\ref{lem:product-semicubes2}, the Cartesian product $G\square X$ is an opposite-semicube-isomorphic partial cube. In what follows, it suffices to prove that $G\square X$ is not antipodal.

Because any vertex at maximum distance from a given vertex is a relative antipode of that vertex, the finiteness and connectedness of \(G\) imply that every vertex of \(G\) has at least one relative antipode. As \(G\) is not antipodal, there exists a vertex \(x\in V(G)\) having two distinct relative antipodes \(y,z\in V(G)\).

Choose an arbitrary vertex $t\in V(X)$, and let $\bar t$ be the unique relative antipode of $t$ in $X$. Define $a=(x,t),~u=(y,\bar t),~v=(z,\bar t).$ We show that both $u$ and $v$ are relative antipodes of $a$ in $G\square X$.

First, consider $u$. By the distance formula for Cartesian products,
\[
        d_{G\square X}(a,u)
        =d_G(x,y)+d_X(t,\bar t).
\]
Let $u'$ be an arbitrary neighbor of $u$ in $G\square X$. There are two possible cases.

Suppose first that $u'=(y',\bar t),$ where $y'$ is a neighbor of $y$ in $G$. Since $y$ is a relative antipode of $x$, the definition of a relative antipode gives $d_G(x,y)\ge d_G(x,y').$ Therefore,
\[
\begin{aligned}
d_{G\square X}(a,u')
   &=d_G(x,y')+d_X(t,\bar t)\\
   &\le d_G(x,y)+d_X(t,\bar t)\\
   &=d_{G\square X}(a,u).
\end{aligned}
\]

Suppose next that $u'=(y,t'),$ where $t'$ is a neighbor of $\bar t$ in $X$. Since $\bar t$ is a relative antipode of $t$, we have $d_X(t,\bar t)\ge d_X(t,t').$ Therefore,
\[
\begin{aligned}
d_{G\square X}(a,u')
   &=d_G(x,y)+d_X(t,t')\\
   &\le d_G(x,y)+d_X(t,\bar t)\\
   &=d_{G\square X}(a,u).
\end{aligned}
\]
Thus, $d_{G\square X}(a,u)\ge d_{G\square X}(a,u')$ for every neighbor $u'$ of $u$. Hence, $u$ is a relative antipode of $a$.

The same argument, with $z$ in place of $y$, shows that $v$ is also a relative antipode of $a$. Since $y\ne z$, we have $u=(y,\bar t)\ne (z,\bar t)=v.$ Therefore, the vertex $a$ has at least two distinct relative antipodes in $G\square X$. It follows that $G\square X$ is not antipodal.
\end{proof}
\begin{example}\label{exa:ExampleofCartesianProduct}
    Taking \(G=B_n\) and \(X=Q_m\) gives the family $G_{n,m}=B_n\square Q_m$ for $n\ge 5$ and $m\ge 0.$ By Theorem \ref{thm:product-counterexamples}, $G_{n,m}$ is an opposite-semicube-isomorphic partial cube that is not antipodal. Consequently, it is not harmonic-even. 

\end{example}

\section{A Helly-type condition forcing antipodality}

The counterexamples presented in the preceding sections demonstrate that local opposite-semicube isomorphisms---considered in isolation---are insufficient to guarantee antipodality. In this section, we establish that the opposite-semicube Helly property constitutes the precise additional condition required to ensure antipodality.


\begin{lemma}\label{lem:quadrant-balance}
Let $G$ be a finite opposite-semicube-isomorphic partial cube. Let $E_i$ and $E_j$ be two distinct $\Theta$-classes of $G$. Then, for every $\varepsilon,\delta\in\{0,1\}$,
\[
        |W_i^{\varepsilon}\cap W_j^{\delta}|
        =
        |W_i^{1-\varepsilon}\cap W_j^{1-\delta}|.
\]
\end{lemma}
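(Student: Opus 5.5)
The plan is to use only the vertex counts that the opposite-semicube isomorphisms give us, and then solve a small linear system. Since $G$ is finite and opposite-semicube-isomorphic, $G[W_i^0]\cong G[W_i^1]$ for every $\Theta$-class $E_i$. In particular $|W_i^0|=|W_i^1|$ for every $i$. No other consequence of the isomorphism will be needed.

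Fix two distinct $\Theta$-classes $E_i$ and $E_j$. The semicubes $W_i^0,W_i^1$ partition $V(G)$, and so do $W_j^0,W_j^1$. Hence $V(G)$ is the disjoint union of the four ``quadrants'' $W_i^\varepsilon\cap W_j^\delta$, for $\varepsilon,\delta\in\{0,1\}$. Write
\[
a=|W_i^0\cap W_j^0|,\quad b=|W_i^0\cap W_j^1|,\quad c=|W_i^1\cap W_j^0|,\quad e=|W_i^1\cap W_j^1|.
\]
Then $|W_i^0|=|W_i^1|$ becomes $a+b=c+e$, and $|W_j^0|=|W_j^1|$ becomes $a+c=b+e$.

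Subtracting the second equation from the first gives $b-c=c-b$, so $b=c$. Substituting back into $a+b=c+e$ gives $a=e$. These two equalities are exactly the claim:
\begin{itemize}
\item the case $(\varepsilon,\delta)\in\{(0,0),(1,1)\}$ is $a=e$;
\item the case $(\varepsilon,\delta)\in\{(0,1),(1,0)\}$ is $b=c$.
\end{itemize}

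There is no real obstacle here. The only point that needs care is that the four quadrants really do partition $V(G)$. This holds because each $\Theta$-class splits $V(G)$ into exactly two complementary semicubes. Finiteness is used only so that the cardinalities are finite numbers that can be subtracted.
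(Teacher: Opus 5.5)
Your proposal is correct and follows essentially the same route as the paper: both use only $|W_i^0|=|W_i^1|$ and $|W_j^0|=|W_j^1|$, write these in terms of the four quadrant counts, and subtract the two resulting equations to get $a_{01}=a_{10}$ and then $a_{00}=a_{11}$.
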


\begin{proof}
Since $G$ is opposite-semicube-isomorphic, the two semicubes determined by each $\Theta$-class are isomorphic and hence have the same cardinality. Thus, $|W_i^{0}|=|W_i^{1}|,~|W_j^{0}|=|W_j^{1}|.$ Put
\[
        a_{\varepsilon\delta}
        =
        |W_i^{\varepsilon}\cap W_j^{\delta}|
        ~\text{for} ~\varepsilon,\delta\in\{0,1\}.
\]
Since $W_{i}^{0}=(W_{i}^{0}\cap W_{j}^{0})\cup (W_{i}^{0}\cap W_{j}^{1})$ and the two sets $(W_{i}^{0}\cap W_{j}^{0})$ and $(W_{i}^{0}\cap W_{j}^{1})$ are disjoint, we have $|W_{i}^{0}|=a_{00}+a_{01}$. Similarly, $|W_{i}^{1}|=a_{10}+a_{11}$, $|W_{j}^{0}|=a_{00}+a_{10}$, and $|W_{j}^{1}|=a_{01}+a_{11}$. Therefore, $a_{00}+a_{01}=a_{10}+a_{11},$ and $a_{00}+a_{10}=a_{01}+a_{11}.$ Subtracting the second equality from the first gives \(a_{01}=a_{10}\), and then either equality gives \(a_{00}=a_{11}\). This is exactly the desired assertion.
\end{proof}

\begin{lemma}\label{lem:opposite-family-pairwise}
Let $G$ be a finite opposite-semicube-isomorphic partial cube. Then, for every vertex $x\in V(G)$, the family
\[
        \mathcal O(x)=\{W_i^{*}(x):1\le i\le d\}
\]
is pairwise intersecting.
\end{lemma}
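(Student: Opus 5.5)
We need to show that for $i\neq j$, $W_i^*(x)\cap W_j^*(x)\neq\emptyset$. The plan is to combine the quadrant balance of Lemma~\ref{lem:quadrant-balance} with the trivial fact that $x$ itself populates one quadrant. Fix $x$ and two distinct $\Theta$-classes $E_i,E_j$, and choose labels so that $W_i(x)=W_i^{0}$ and $W_j(x)=W_j^{0}$. Then $W_i^*(x)=W_i^1$ and $W_j^*(x)=W_j^1$.

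Since $x\in W_i^0\cap W_j^0$, we have $|W_i^0\cap W_j^0|\ge 1$. By Lemma~\ref{lem:quadrant-balance} with $\varepsilon=\delta=0$,
\[
|W_i^{1}\cap W_j^{1}|=|W_i^{0}\cap W_j^{0}|\ge 1 .
\]
Hence $W_i^*(x)\cap W_j^*(x)\neq\emptyset$. As $i\neq j$ and $x$ were arbitrary, the family $\mathcal O(x)$ is pairwise intersecting.

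The only point needing care is the labeling. Lemma~\ref{lem:quadrant-balance} holds for every choice of $\varepsilon,\delta$, so we may relabel each $\Theta$-class so that the semicube containing $x$ carries the label $0$, without loss of generality. Finiteness enters only through Lemma~\ref{lem:quadrant-balance}, which compares cardinalities; that lemma is exactly why the isomorphism hypothesis, in the form of equal sizes of opposite semicubes, suffices. I do not expect a genuine obstacle; the step that actually does the work is recognizing that the diagonal case $\varepsilon=\delta$ of the balance lemma applies to the quadrant containing $x$.
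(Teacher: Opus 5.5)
Your proof is correct and is essentially the paper's argument. Both use that $x\in W_i(x)\cap W_j(x)$ and then apply Lemma~\ref{lem:quadrant-balance} (with labels chosen so that the quadrant containing $x$ is the $(\varepsilon,\delta)$ one) to conclude $|W_i^*(x)\cap W_j^*(x)|=|W_i(x)\cap W_j(x)|\ge 1$.
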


\begin{proof}
Let $i\ne j$. Since $x\in W_i(x)\cap W_j(x)$, we have $W_i(x)\cap W_j(x)\ne\emptyset .$ By Lemma~\ref{lem:quadrant-balance},
\[
        |W_i^*(x)\cap W_j^*(x)|
        =
        |W_i(x)\cap W_j(x)|.
\]
Hence,
\[
        W_i^*(x)\cap W_j^*(x)\ne\emptyset .
\]
Since this holds for every pair $i\ne j$, the family $\mathcal O(x)$ is pairwise intersecting.
\end{proof}

\begin{lemma}\label{lem:canonical-complement}
Let $G$ be a finite opposite-semicube-isomorphic partial cube with the opposite-semicube Helly property. Then, for every vertex $x\in V(G)$, there exists a unique vertex $ x'\in V(G)$ such that
$$
        x'\in \bigcap_{i=1}^d W_i^*(x).
$$
Equivalently, $x$ and $x'$ lie in opposite semicubes for every $\Theta$-class of $G$.
\end{lemma}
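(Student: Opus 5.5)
Existence comes from combining Lemma~\ref{lem:opposite-family-pairwise} with the opposite-semicube Helly property; uniqueness comes from the fact that a partial cube vertex is determined by its semicubes. Throughout, I would fix a vertex $x\in V(G)$ and work with the family $\mathcal O(x)=\{W_i^*(x):1\le i\le d\}$.

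For existence, Lemma~\ref{lem:opposite-family-pairwise} (which needs only that $G$ is finite and opposite-semicube-isomorphic) gives $W_i^*(x)\cap W_j^*(x)\neq\emptyset$ for all $i\neq j$. This is exactly the hypothesis in the definition of the opposite-semicube Helly property. Applying that property at $x$ yields $\bigcap_{i=1}^d W_i^*(x)\neq\emptyset$, so we can pick some $x'$ in this intersection. If $d=1$ the statement is trivial, since $W_1^*(x)$ is a nonempty semicube.

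For uniqueness, I would fix an isometric embedding of $G$ into $Q_d$ with $d=\idim(G)$, so that the $\Theta$-classes correspond to coordinates and $W_i^0,W_i^1$ are the coordinate halfspaces $\{x_i=0\}$ and $\{x_i=1\}$. The condition $x'\in W_i^*(x)$ for every $i$ says $x'_i=1-x_i$ for all $i$, so $x'$ is the bitwise complement of $x$ and is uniquely determined. To avoid coordinates, one can argue instead that if $u\neq v$ both lay in the intersection, then on a $u,v$-geodesic the first edge belongs to some $\Theta$-class $E_i$. Since geodesics cross each $\Theta$-class at most once, $u$ and $v$ would lie in different semicubes of $E_i$, contradicting that both lie in $W_i^*(x)$.

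The equivalent reformulation is just unwinding definitions: $x\in W_i(x)$ and $x'\in W_i^*(x)$ for each $i$. I do not expect a real obstacle here. The only care needed is the degenerate case of few $\Theta$-classes, and making sure that "separates into different semicubes" is justified by the standard fact that the $\Theta$-classes met by a geodesic are pairwise distinct.
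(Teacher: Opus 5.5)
Your proposal is correct and follows the paper's proof: existence comes from Lemma~\ref{lem:opposite-family-pairwise} plus the opposite-semicube Helly property, and uniqueness holds because two vertices in $\bigcap_{i=1}^d W_i^*(x)$ are separated by no $\Theta$-class. The paper phrases uniqueness through the fact that distance in a partial cube equals the number of separating $\Theta$-classes, which is the same content as your coordinate argument and your first-edge-of-a-geodesic argument.
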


\begin{proof}
By Lemma~\ref{lem:opposite-family-pairwise}, the family $\mathcal O(x)=\{W_i^*(x):1\le i\le d\}$ is pairwise intersecting. Since $G$ has the opposite-semicube Helly property, we obtain $\bigcap\limits_{i=1}^d W_i^*(x)\ne\emptyset .$ Thus, there exists at least one such vertex $x'$.

Suppose that $u$ and $v$ both belong to $\bigcap\limits_{i=1}^d W_i^*(x).$ Then, for every $\Theta$-class $E_i$, the vertices $u$ and $v$ lie in the same semicube determined by $E_i$. Hence, no $\Theta$-class separates $u$ and $v$. Since, in a partial cube, the distance between two vertices equals the number
of $\Theta$-classes that separate them, we get $d_G(u,v)=0.$ Therefore, $u=v$.
\end{proof}

\begin{theorem}\label{thm:OH-OSI-antipodal}
Let \(G\) be a finite partial cube with the opposite-semicube Helly property. Then \(G\) is antipodal if and only if \(G\) is opposite-semicube-isomorphic.
\end{theorem}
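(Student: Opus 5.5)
The proof has two directions. The forward direction needs no Helly property, and the backward direction rests on Lemma~\ref{lem:canonical-complement}.

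\emph{Antipodal implies opposite-semicube-isomorphic.} Assume \(G\) is antipodal. Theorem~\ref{T2.1} then provides a semicube-switching automorphism \(\alpha\) of \(G\). By definition, \(\alpha\) induces \(G[W_i^0]\cong G[W_i^1]\) for every \(\Theta\)-class \(E_i\), so \(G\) is opposite-semicube-isomorphic. A direct alternative: the map \(x\mapsto\bar x\) is \(\Theta\)-faithful, so it swaps the semicubes of each \(E_i\).

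\emph{Opposite-semicube-isomorphic implies antipodal.} Assume \(G\) is opposite-semicube-isomorphic and has the opposite-semicube Helly property. Fix a vertex \(x\). Lemma~\ref{lem:canonical-complement} gives a unique vertex \(x'\) with \(x'\in W_i^*(x)\) for every \(i\), so every \(\Theta\)-class separates \(x\) and \(x'\). Since distance in a partial cube counts the separating \(\Theta\)-classes, \(d_G(x,x')=d\). Every distance is at most \(d\), so \(x'\) is at maximum distance from \(x\) and is in particular a relative antipode of \(x\).

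For uniqueness, let \(y\) be any relative antipode of \(x\); I will show \(y=x'\). Suppose some \(\Theta\)-class \(E_i\) fails to separate \(x\) and \(y\), so that \(y\in W_i(x)\).
\begin{itemize}
\item Choose a geodesic from \(y\) to \(x'\). It crosses exactly the \(\Theta\)-classes separating \(y\) from \(x'\), and this set includes \(E_i\).
\item Take its first edge \(yy'\), lying in some class \(E_j\). Then \(E_j\) separates \(y\) from \(x'\), so \(y\in W_j(x)\) while \(y'\in W_j^*(x)\).
\item Crossing \(E_j\) therefore moves away from \(x\), which gives \(d_G(x,y')=d_G(x,y)+1\).
\end{itemize}
This contradicts \(y\) being a relative antipode. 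Hence every \(\Theta\)-class separates \(x\) and \(y\), so \(y\in\bigcap_i W_i^*(x)\), and the uniqueness part of Lemma~\ref{lem:canonical-complement} gives \(y=x'\). Thus every vertex has exactly one relative antipode, and \(G\) is antipodal.

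The only step needing care is the distance computation \(d_G(x,y')=d_G(x,y)+1\). Here I will use that the \(x,y'\) distance equals the number of \(\Theta\)-classes separating \(x\) from \(y'\). The edge \(yy'\) changes membership only with respect to \(E_j\), and it moves from \(W_j(x)\) to \(W_j^*(x)\), so the count increases by exactly one. The rest of the argument is just Lemma~\ref{lem:canonical-complement} combined with Theorem~\ref{T2.1}.
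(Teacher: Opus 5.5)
Your proof is correct and follows the paper's argument essentially step for step. The forward direction uses Theorem~\ref{T2.1}, and the converse combines Lemma~\ref{lem:canonical-complement} with the first edge of a geodesic from $y$ to $x'$. The differences are cosmetic: you certify $x'$ as a relative antipode through $d_G(x,x')=d$ being the largest possible distance rather than checking its neighbours, and you finish uniqueness through the lemma's uniqueness clause rather than treating every $y\neq x'$ directly.
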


\begin{proof}
Let \(G\) be a finite partial cube and $E_{1}$, \ldots, $E_{d}$ be the $\Theta$-classes of $G$. Suppose first that \(G\) is antipodal. By Theorem~\ref{T2.1}, \(G\) admits a semicube-switching automorphism. Hence, for every $\Theta$-class \(E_{i}\), this automorphism restricts to an isomorphism $G[W^0_{i}]\cong G[W^1_{i}].$ Thus, \(G\) is opposite-semicube-isomorphic.

Conversely, suppose that $G$ is opposite-semicube-isomorphic and has the opposite-semicube Helly property. By Lemma~\ref{lem:canonical-complement}, for every vertex $x\in V(G)$, there exists a unique vertex $x'\in V(G)$ lying in the semicube opposite to $x$ for every $\Theta$-class.

We prove that $x'$ is the unique relative antipode of $x$. Let $u$ be a neighbor of $x'$, and let $E_i$ be the $\Theta$-class containing the edge $x'u$. The vertices $x$ and $x'$ are separated by every $\Theta$-class. On the other hand, $x$ and $u$ are separated by every $\Theta$-class except $E_i$. Since distance in a partial cube equals the number of $\Theta$-classes separating the two vertices, we obtain
\[
        d_G(x,u)=d_G(x,x')-1.
\]
Thus, $d_G(x,x')>d_G(x,u)$ for every neighbor $u$ of $x'$. Hence, $x'$ is a relative antipode of $x$.

Let $y\ne x'$, and choose a shortest $y,x'$-path $\langle y=y_0,y_1,\ldots,y_r=x'\rangle.$ Let $E_j$ be the $\Theta$-class containing the edge $y_0y_1$. Since $E_j$ separates $y$ from $x'$, and $x'$ lies in the semicube opposite to $x$ with respect to $E_j$, the vertices $x$ and $y$ lie in the same semicube determined by $E_j$. After crossing the edge $y_0y_1$, the vertices $x$ and $y_1$ lie in opposite semicubes with respect to $E_j$, while whether $x$ and the current vertex are separated by any other $\Theta$-class remains unchanged. Therefore,
\[
        d_G(x,y_1)=d_G(x,y)+1.
\]
Since $y_1$ is a neighbor of $y$, the vertex $y$ is not a relative
antipode of $x$. Therefore, $x'$ is the unique relative antipode
of $x$, and hence $G$ is antipodal.
\end{proof}

Combining Theorems \ref{T2.1} and \ref{thm:OH-OSI-antipodal}, we derive the following corollary.

\begin{corollary}
Let \(G\) be a finite partial cube with the opposite-semicube Helly property. Then \(G\) is harmonic-even if and only if \(G\) is opposite-semicube-isomorphic.
\end{corollary}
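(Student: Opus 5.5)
The plan is to derive the corollary directly by chaining two equivalences already established in the paper, so no new combinatorial argument should be needed. Let \(G\) be a finite partial cube with the opposite-semicube Helly property. By the final assertion of Theorem~\ref{T2.1} (Polat's theorem), \(G\) is harmonic-even if and only if it is antipodal. This equivalence holds for every partial cube and uses no extra hypothesis. By Theorem~\ref{thm:OH-OSI-antipodal}, which does use the Helly hypothesis, \(G\) is antipodal if and only if it is opposite-semicube-isomorphic. Composing the two gives: harmonic-even \(\Leftrightarrow\) antipodal \(\Leftrightarrow\) opposite-semicube-isomorphic.

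I will write out both directions for clarity. For the forward direction, harmonic-even implies antipodal by Theorem~\ref{T2.1}. Antipodal then implies opposite-semicube-isomorphic by the first half of the proof of Theorem~\ref{thm:OH-OSI-antipodal}, via a semicube-switching automorphism. This half in fact needs no Helly assumption, which matches the observation of Klav\v{z}ar and Kov\v{s}e. For the converse, opposite-semicube-isomorphic together with the Helly property gives antipodal by Theorem~\ref{thm:OH-OSI-antipodal}. This rests on Lemmas~\ref{lem:quadrant-balance}--\ref{lem:canonical-complement}, which produce the unique vertex \(x'\) lying in \(\bigcap_i W_i^*(x)\). Antipodal then gives harmonic-even by Theorem~\ref{T2.1}, or directly from the remark in the introduction that every antipodal graph is harmonic-even.

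The only point that needs checking is that the hypotheses match: Theorem~\ref{T2.1} is stated for arbitrary partial cubes, and Theorem~\ref{thm:OH-OSI-antipodal} assumes exactly finiteness and the opposite-semicube Helly property. So there is no real obstacle; the substantive work is already contained in Theorem~\ref{thm:OH-OSI-antipodal}.
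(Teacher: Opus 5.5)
Your proposal is correct and matches the paper's proof: the corollary is obtained by combining Polat's equivalence of harmonic-evenness and antipodality (Theorem~\ref{T2.1}) with Theorem~\ref{thm:OH-OSI-antipodal}. You are also right that only the converse direction actually uses the Helly hypothesis.
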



The failure of the opposite-semicube Helly property in \(B_n\) can be seen explicitly. Let $z=111011^{n-5}\in V(B_n).$ In the ambient hypercube \(Q_n\), the unique vertex lying in the semicube opposite to \(z\) in every coordinate direction is $000100^{n-5}.$ This vertex belongs to \(S_n\), and therefore it is not a vertex of \(B_n\). Hence, inside \(B_n\), $\bigcap_{i=1}^n W_i^{*}(z)=\emptyset.$ On the other hand, \(B_n\) is opposite-semicube-isomorphic by Proposition~\ref{prop:semicubes}. Thus, Lemma~\ref{lem:opposite-family-pairwise} implies that the family $\mathcal O_{B_n}(z)=\{W_i^{*}(z)\mid 1\le i\le n\}$ is pairwise intersecting. Therefore, for the vertex \(z\), the semicubes opposite to \(z\) are pairwise intersecting but have empty total intersection.

The example above shows concretely how the Helly-type condition fails in \(B_n\). Pairwise isomorphism of opposite semicubes provides local balance, but it does not guarantee the existence of a common vertex lying in all semicubes opposite to a fixed vertex.
\enlargethispage{2\baselineskip}

\section{Concluding remarks}

Our results reveal two complementary aspects of the local isomorphism condition on opposite semicubes. On the one hand, we give a negative answer to the question of Klav\v{z}ar and Kov\v{s}e. The graphs $B_n=Q_n-S_n$, $n\ge 5$, are partial cubes with pairwise isomorphic opposite semicubes, but they are not antipodal (hence not harmonic-even by Polat's theorem). Equivalently, their local opposite-semicube isomorphisms cannot be assembled into a global antipodal structure.

On the other hand, we show that the opposite-semicube Helly property is exactly such a condition to ensure the equivalence between opposite-semicube isomorphism and antipodality, i.e., a finite partial cube with this Helly-type property is antipodal if and only if its opposite semicubes are pairwise isomorphic. Moreover, the Helly condition is essential, as the counterexample family $B_n$ precisely violates it.

The base graph
\[
B_5=Q_5-\{00000,00001,00010,00101,11011,11100,11110,11111\}
\]
has $24$ vertices. It is the smallest counterexample found in our present search. However, our arguments do not prove that it is order-minimal. This leads to the following problem.

\begin{problem}
Let $E_{1}$, \ldots, $E_{d}$ be the $\Theta$-classes of partial cube $G$. Determine the minimum order of $G$ satisfying $G[W^0_{i}]\cong G[W^1_{i}]~\text{for every}~\Theta \text{-class of}~G$, but which is not antipodal or harmonic-even. Is the graph $B_5$ order-minimal? If so, can one classify all counterexamples on $24$ vertices?
\end{problem}

A second problem is to find weaker structural assumptions than the opposite-semicube Helly property that still allow local opposite-semicube isomorphisms to be lifted to antipodality or harmonic-evenness.

\begin{problem}
Find natural subclasses of finite partial cubes in which
opposite-semicube-isomorp\\hism implies the opposite-semicube Helly property, and hence implies antipodality or harmonic-evenness.
\end{problem}

\section{Declaration of competing interest}
The authors declare that they have no known competing financial interests or personal relationships that could have appeared to influence the work reported in this paper.

\section{Data availability}
No data was used for the research described in the article.

\section{Acknowledgments}
This work was partially supported by the National Natural Science Foundation of China (No. 12071194).

\end{CJK}

\begin{thebibliography}{99}


\bibitem{BandeltChepoi2008}
H.-J. Bandelt and V. Chepoi,
Metric graph theory and geometry: a survey,
in: J. E. Goodman, J. Pach and R. Pollack (eds.),
\emph{Surveys on Discrete and Computational Geometry: Twenty Years Later},
Contemporary Mathematics, Vol. 453, American Mathematical Society,
Providence, RI, 2008, pp. 49--86.




\bibitem{Djokovic1973}
D. Djokovi\'c,
Distance preserving subgraphs of hypercubes,
\emph{J. Combin. Theory Ser. B} 14 (1973), 263--267.



\bibitem{FukudaHanda1993}
K. Fukuda and K. Handa, Antipodal graphs and oriented matroids, \emph{Discrete Math.} 111 (1993), 245--256.


\bibitem{Glivjak1970} F. Glivjak, A. Kotzig and J. Plesn\'{i}k, Remark on the graphs with a central symmetry, \emph{Monatsh. Math.} 74 (1970), 302--307.


\bibitem{Graham1971} R. L. Graham and H. Pollak, On the addressing problem for loop switching, \emph{Bell Syst. Tech. J.} 50 (1971), 2495--2519.

\bibitem{HammackImrichKlavzar2011}
R. Hammack, W. Imrich and S. Klav\v{z}ar,
Handbook of product graphs, second edition,
CRC Press, 2011.

\bibitem{Handa1999} K. Handa, Bipartite graphs with balanced $(a,b)$-partitions, \emph{Ars Combin.} 51 (1999), 113--119.

\bibitem{Imrich2000} W. Imrich and S. Klav\v{z}ar, Product graphs: structure and recognition, \emph{John Wiley $\&$ Sons}, 2000.


\bibitem{KlavzarKovse2009}
S. Klav\v{z}ar and M. Kov\v{s}e,
On even and harmonic-even partial cubes,
\emph{Ars Combin.} 93 (2009), 77--86.

\bibitem{Kotzig1968} A. Kotzig, Centrally symmetric graphs, \emph{Czechoslovak Math. J.} 18 (1968), 606--615.




\bibitem{MulderSchrijver1979}
H. M. Mulder and A. Schrijver,
Median graphs and Helly hypergraphs,
\emph{Discrete Math.} 25 (1979), 41--50.


\bibitem{Ovchinnikov2008} S. Ovchinnikov, Partial cubes: structures, characterizations, and constructions, \emph{Discrete Math.} 308 (2008), 5597--5621.


\bibitem{Polat2019}
N. Polat,
On some characterizations of antipodal partial cubes,
\emph{Discuss. Math. Graph Theory} 39 (2019), 439--453.

\bibitem{Sabidussi1996} G. Sabidussi, Graphs without dead ends, \emph{European J. Combin.} 17 (1996), 69--87.


\bibitem{Winkler1984}
P. Winkler,
Isometric embeddings in products of complete graphs,
\emph{Discrete Appl. Math.} 7 (1984), 221--225.

\end{thebibliography}
\end{document}